\documentclass[10pt, a4paper]{article}
\usepackage{amsmath, amsfonts}
\usepackage[usenames]{color}
\usepackage[latin1]{inputenc}
\usepackage[table, dvipsnames]{xcolor}
\usepackage{array}
\newcolumntype{P}[1]{>{\centering\arraybackslash}p{#1}}
\newcolumntype{M}[1]{>{\centering\arraybackslash}m{#1}}
\usepackage{multirow}
\usepackage{mathrsfs}
\usepackage{hyperref} %transformas o sumario e as citacoes em hiperlinks
\hypersetup{bookmarks = true, % mostra a barra de bookmarks
                 % pdftitle = {O titulo do seu TCC}, % titulo
                 % pdfauthor = {Evelin Lambertes}, % autor
                 % pdfsubject = {TCC}, % subject of the document
                 % pdfcreator = {Emacs23+Texlive2010 Debian/GNU}, % programa
                 % pdfkeywords = {natal, férias, papai noel}, % keywords
                  colorlinks = true, % false: boxed links; true: colored links
                  linkcolor = blue, % cor para links
                  citecolor = blue, % cor para links em bibliografia
                  }
\openup4pt

%\usepackage{draftwatermark} %
%\SetWatermarkText{O.N. SILVA - CONFIDENTIAL - O.N. SILVA - CONFIDENTIAL} %
%\SetWatermarkFontSize{30cm} %
%\SetWatermarkLightness{.95} %
%\SetWatermarkAngle{60} %
\newcommand{\xdownarrow}[1]{%
  {\left\downarrow\vbox to #1{}\right.\kern-\nulldelimiterspace}
}

\newcommand{\xuparrow}[1]{%
  {\left\uparrow\vbox to #1{}\right.\kern-\nulldelimiterspace}
}

\oddsidemargin=0.6cm \textwidth=16cm \textheight=24.7cm
\usepackage{graphicx}
\usepackage{amssymb}
\usepackage{amsmath}

\newtheorem{theorem}{Theorem}[section]

\newtheorem{corollary}[theorem]{Corollary}

\newtheorem{definition}[theorem]{Definition}
\newtheorem{example}[theorem]{Example}

\newtheorem{lemma}[theorem]{Lemma}

\newtheorem{proposition}[theorem]{Proposition}
\newtheorem{remark}[theorem]{Remark}
\def\<#1>{\mathinner{\langle#1\rangle}}

\setlength{\topmargin}{-2cm} \setlength{\oddsidemargin}{-1cm}
\setlength{\evensidemargin}{-1cm} \setlength{\textwidth}{18cm}
\setlength{\textheight}{25.8cm}
\newenvironment{proof}[1][Proof]{\noindent\textbf{#1.} }{\ \rule{0.5em}{0.5em}}

\title{\textbf{On the topology of the transversal slice of a quasi-homogeneous map germ}}
\author{ \ \ \ \\{O.N. Silva\footnote{O.N. Silva: Departamento de Matemática, Universidade Federal de São Carlos, Caixa Postal 676, 13560-905 São Carlos, SP, Brazil, $ \ $ e-mail: otoniel@dm.ufscar.br}}}
\date{}

\begin{document}

\maketitle

\begin{abstract}
We consider a corank $1$, finitely determined, quasi-homogeneous map germ $f$ from $(\mathbb{C}^2,0)$ to $(\mathbb{C}^3,0)$. We describe the embedded topological type of a generic hyperplane section of $f(\mathbb{C}^2)$, denoted by $\gamma_f$, in terms of the weights and degrees of $f$. As a consequence, a necessary condition for a corank $1$ finitely determined map germ $g:(\mathbb{C}^2,0)\rightarrow (\mathbb{C}^3,0)$ to be quasi-homogeneous is that the plane curve $\gamma_g$ has either two or three characteristic exponents. As an application of our main result, we also show that any one-parameter unfolding $F=(f_t,t)$ of $f$ which adds only terms of the same degrees as the degrees of $f$ is Whitney equisingular.
\end{abstract}

\section{Introduction}

$ \ \ \ \ $ Throughout this paper, we assume that $f:(\mathbb{C}^2,0)\rightarrow(\mathbb{C}^3,0)$ is a finite, generically $1-1$, holomorphic map germ, unless otherwise stated. If $f$ has corank $1$, local coordinates can be chosen so that these map germs can be written in the form 

\begin{equation}\label{eq15}
f(x,y)=(x, p(x,y),q(x,y))
\end{equation}

\noindent for some function germs $p,q \in m_2^2$, where $m_2$ is the maximal ideal of the local ring of holomorphic function germs in two variables $\mathcal{O}_2$.

In \cite{slice}, Nuño-Ballesteros and Marar studied the generic hyperplane sections of $f(\mathbb{C}^2)$, usually called, the \textit{transversal slice of $f$} (see Definition \ref{defslice}). They showed that if a certain genericity condition is satisfied, then the transverse slice curve $\gamma_f$ contains some information on the geometry of $f$.

In this work, we consider a corank $1$, finitely determined, quasi-homogeneous map germ $f(x,y)=(x,p(x,y),q(x,y))$ from $(\mathbb{C}^2,0)$ to $(\mathbb{C}^3,0)$. To illustrate the problem that we will present, consider the map germs:

\begin{center}
 $f_1(x,y)=(x,y^3+xy,y^4+3xy)$ $ \ \ $ and $ \ \ $ $f_2(x,y)=(x,y^3,xy+y^5)$,
 \end{center} 

\noindent which are the singularities $P_3$ and $H_2$ of Mond's list \cite[p. 378]{mond7}, respectively. Note that both $f_1$ and $f_2$ are corank $1$, finitely determined and quasi-homogeneous map germs.

Denote by $(X,Y,Z)$ the coordinates of $\mathbb{C}^3$ and consider the hyperplane $H$ given by the equation $X=0$. Note that the hyperplanes sections $H\cap f_1(\mathbb{C}^2)$ and $H\cap f_2(\mathbb{C}^2)$ of $f_1$ and $f_2$ are reduced plane curves parametrized by $u \mapsto (u^3,u^4)$ and $u\mapsto (u^3,u^5)$, respectively. One can check that $H\cap f_1(\mathbb{C}^2)$ is in fact the transversal slice of $f$, in the sense that it satisfies the transversality conditions of \cite[Sec. 3]{slice}. However, in \cite[Ex. 5.2]{slice}, Marar and Nuño-Ballesteros showed that $H\cap g(\mathbb{C}^2)$ is not the transversal slice of $f_2$. This means that the hyperplane $H$ defined by $X=0$ is not a generic plane for $f_2$ in the sense of \cite[Sec. 3]{slice}. On the other hand, there are germs of diffeomorphism $\Phi:(\mathbb{C}^2,0)\rightarrow (\mathbb{C}^2,0)$ and $\Psi: (\mathbb{C}^3,0)\rightarrow (\mathbb{C}^3,0)$ such that $g= \Psi \circ f_2 \circ \Phi$, where 

\begin{center}
$g(x,y)=(x,y^3,xy+xy^2+y^4)$. 
\end{center}
 
In other words, we have that $f_2$ is  $\mathcal{A}$-equivalent to $g$. Furthermore, now the hyperplane $H=V(X)$ is generic for $g$. So, the transversal slice of $g$ is the reduced plane curve parametrized by $u\mapsto (u^3,u^4)$. We conclude that the embedded topological type of the transversal slice of $f_2$ is the same as the plane curve parametrized by $u\mapsto (u^3,u^4)$.

This example shows that for a corank $1$, finitely determined, quasi-homogeneous map germ in the normal form (\ref{eq15}) the plane $H$ defined by $X=0$ may not be generic for $f$. Since the embedded topological type of the transversal slice of $f$ does not depend on the choice of the coordinates (see \cite{pellikaan1}), a solution to fix this inconvenience is to work with the $\mathcal{A}$-equivalence. However, as illustrated in the above example, when we compose a quasi-homogeneous map germ with germs of diffeormorphisms in the source and target we may lose the property of the resulting composite map being quasi-homogeneous (in relation to the new coordinate system). That seems to mean that the embedded topological type of the transversal slice curve of $f$ is not related to the quasi-homogeneous type of $f$. So, the following question is natural:

\begin{flushleft}
\textbf{Question 1:} Let $f:(\mathbb{C}^2,0)\rightarrow (\mathbb{C}^3,0)$ be a corank $1$, finitely determined, quasi-homogeneous map germ on the form (\ref{eq15}), i.e., $f(x,y)=(x,p(x,y),q(x,y))$. Is the embedded topological type of the transversal slice $\gamma_f$ of $f$ determined by the weights of $x$ and $y$ and the weighted degrees of $p$ and $q$?
\end{flushleft}

We know that if $f$ has corank $1$, then $\gamma_f$ is an irreducible plane curve. It is well known that for an irreducible plane curve the characteristic exponents determine and are determined by the embedded topological type of the curve. Thus Question 1 can be reformulated in terms of the characteristic exponents of $\gamma_f$. That is, we can ask if the characteristic exponents of $\gamma_f$ are determined by the weights and degrees of $f$. In the first part of this paper, we present a positive answer to Question $1$, (see Propositions \ref{mult 2} and \ref{mult greater 2}). We show that the number of characteristic exponents of $\gamma_f$ can only be two or three, depending on some relations between the weights and degrees of $f$. More precisely, we show the following result:

\begin{theorem}\label{mainresult1} Let $f:(\mathbb{C}^2,0)\rightarrow (\mathbb{C}^3,0)$ be a corank $1$, finitely determined, quasi-homogeneous map germ. Write $f$ in the normal form

\begin{center}
 $f(x,y)=(x,p(x,y),q(x,y))$,
 \end{center} 

\noindent and let $a,b$ be the weights of the variables $x,y$, respectively. Let $ d_2$ and $d_3$ be the weighted degrees of $p$ and $q$, respectively, with $2 \leq d_2\leq d_3$. Set $c=min\lbrace a,d_2 \rbrace$.  Then:

\begin{flushleft}
 If $a\leq d_2$, $4\leq \dfrac{d_2}{b}$ and $gcd(d_2,d_3)=2$, then $\gamma_f$ has three characteristic exponents given by
\end{flushleft}

 \begin{center}
$d_2$, $ \  $ $d_3$ $ \  $ and $ \  $ $d_2+ d_3 -a$.
 \end{center}

\noindent Otherwise, $\gamma_f$ has only two characteristic exponents given by

 \begin{center}
$\dfrac{d_2}{b}$ $ \ \ $ and $ \ \ $ $ \left( \dfrac{(d_2-c)(d_3-b)\cdot c + (d_2-c)\cdot sab}{ab(d_2-b)} \right) +1$,
 \end{center}

\noindent where

\[ s =   \left\{
\begin{array}{ll}
      0 & if $ the restriction of $f$ to the line $x=0$ is generically $1-$to$-1$ $.\\
      1 & otherwise.   
\end{array} 
\right. \]

\end{theorem}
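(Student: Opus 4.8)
### Proof Strategy

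The plan is to reduce the computation of the characteristic exponents of $\gamma_f$ to an explicit parametrization problem, exploiting the quasi-homogeneity of $f$ in an essential way. The first step is to recall from \cite{slice} that the transversal slice $\gamma_f$ can be computed as follows: after a generic linear change of coordinates in the target (which we may take to respect the weighted structure as much as possible), $\gamma_f$ is the image of the curve obtained by intersecting $f(\mathbb{C}^2)$ with a generic hyperplane through the origin. Since $f$ has corank $1$ and is written as $f(x,y)=(x,p(x,y),q(x,y))$, a hyperplane of the form $x = \lambda(\alpha Y + \beta Z)$ pulls back to a plane curve $C_\lambda \subset (\mathbb{C}^2,0)$ whose image under $f$ is $\gamma_f$. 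The key point is that because $p$ and $q$ are quasi-homogeneous of degrees $d_2, d_3$ with weights $a,b$ on $x,y$, the curve $C_\lambda$ admits a Puiseux-type parametrization $y = y(x)$ that can be read off from the lowest-order behavior, and the leading exponents are controlled by the ratios $d_2/b$, $d_3/b$, $a/b$.

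The second step is the core case analysis. I would parametrize $\gamma_f$ by a uniformizing parameter $u$ and track which monomials $u^k$ appear, using the Newton–Puiseux algorithm. The multiplicity of $\gamma_f$ is $\min\{a, d_2\} = c$ (roughly: the $x$-coordinate vanishes to order $a$ along $C_\lambda$ while $p$ vanishes to order $d_2$, up to the reparametrization by the slice). When $a \le d_2$, one analyzes the interplay between the three exponents $d_2, d_3$, and the "mixed" exponent $d_2 + d_3 - a$ coming from the resultant-type cancellation between $p$ and $q$; the condition $\gcd(d_2,d_3) = 2$ together with $4 \le d_2/b$ is precisely what forces a genuine third characteristic exponent rather than the intermediate exponent being absorbed. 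In the complementary case one shows the curve has exactly the two characteristic exponents $d_2/b$ and the displayed rational expression $+1$; the appearance of the parameter $s$ records whether the restriction $f|_{x=0}$ is generically one-to-one, which changes the degree of the map $u \mapsto \gamma_f(u)$ by a factor and hence shifts the second exponent. I would prove the two regimes separately, matching them to Propositions \ref{mult 2} and \ref{mult greater 2}, which handle multiplicity $c = 2$ and $c > 2$ respectively.

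The third step is bookkeeping: verify that the exponents produced are genuinely \emph{characteristic} (i.e. each introduces a new term in the semigroup of the curve not generated by the previous ones) and that they satisfy the standard normalization $e_0 < e_1 < e_2$ with the divisibility drops required of characteristic exponents of an irreducible plane curve. This uses the classical fact (cited in the excerpt) that characteristic exponents are a complete invariant of the embedded topological type, so it suffices to exhibit one parametrization and extract its characteristic data. For the two-exponent formula I would simplify $\frac{(d_2-c)(d_3-b)c + (d_2-c)sab}{ab(d_2-b)} + 1$ by substituting $c = a$ or $c = d_2$ as appropriate and checking it reduces to $q_1/q_0$ in lowest terms for the second Puiseux pair.

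The main obstacle I anticipate is \textbf{controlling the genericity of the hyperplane section simultaneously with the quasi-homogeneous structure}. As the worked example of $f_2 = H_2$ in the introduction shows, the naive hyperplane $X = 0$ need not be generic, and making it generic via $\mathcal{A}$-equivalence destroys quasi-homogeneity. So the delicate part is to argue directly — without changing coordinates — which monomials survive in a generic slice, and in particular to pin down when the middle exponent $d_2 + d_3 - a$ actually occurs. This requires a careful analysis of the Newton polygon of the parametrization and of cancellations among weighted-homogeneous pieces of $p$ and $q$, which is exactly where the arithmetic conditions $a \le d_2$, $4 \le d_2/b$, and $\gcd(d_2,d_3) = 2$ enter; isolating precisely these conditions (and no others) is the technical heart of the proof.
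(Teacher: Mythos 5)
There is a genuine gap here, and also a concrete error. The error first: the multiplicity of $\gamma_f$ equals the multiplicity of the surface $f(\mathbb{C}^2)$ at the origin, which in the normal form of Lemma \ref{lemma corank 1} is $n=d_2/b$ --- this is exactly why $d_2/b$ appears as the first characteristic exponent in the statement --- and not $\min\{a,d_2\}=c$ as you assert (for the cross-cap $(x,y^2,xy)$ one has $c=1$, while its slice is the cusp $u\mapsto(u^2,u^3)$ of multiplicity $2$). Consequently your matching of the two regimes to Propositions \ref{mult 2} and \ref{mult greater 2} is also off: the paper splits according to $m(f(\mathbb{C}^2))=2$ versus $m(f(\mathbb{C}^2))\geq 3$, i.e. according to $d_2/b$, not according to $c$.

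The gap is precisely the point you flag as the main obstacle but for which you offer no mechanism. Genericity of the slicing plane in the sense of Definition \ref{defslice} is not only a condition on $df_0$: conditions (2) and (3) require the plane to meet $f(D(f))$ and its Zariski tangent cone only at the origin, and your sketch never brings the double point curve into play. The paper's argument runs through exactly this: it writes $D(f)=V(\lambda)$ using Mond's formula, classifies the identification and fold components and the tangent directions of their images $f(\mathscr{C}_{\alpha_i})$ (Lemma \ref{lemma aux}), and this analysis is what decides whether $V(X)$ (possibly after the shear $(X,Y,Z)\mapsto(X-Y,Y,Z)$ in Case C) is generic. In the delicate case $a\leq d_2$, $\gcd(n,m)=2$, no coordinate change makes $V(X)$ generic while preserving quasi-homogeneity, and the paper does not attempt a direct Newton--Puiseux analysis of a generic slice of $f$ itself: it passes to the unfolding $f_t=(x-ty^n,\dots)$, proves it is Whitney equisingular (topological triviality by Damon, constancy of $\mu(D(f_t))$ and of the multiplicities of the images of the components of $D(f_t)$, then Gaffney's criterion), checks that $V(X)$ is generic for $f_t$ with $t\neq 0$, parametrizes $\gamma_{f_t}$ there, and transfers the embedded topological type back to $t=0$ via Buchweitz--Greuel. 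Moreover, the numerical identification of the exponents --- the third exponent $d_2+d_3-a$ in Case B and the closed two-exponent formula containing $s$ --- is not obtained by tracking cancellations in a parametrization, but by combining the parametrization data with the formula for $\mu(\gamma,0)$ in terms of weights and degrees (Lemma \ref{milnorgamma}, which rests on \cite{ref9} and \cite{otonielformulaJ}) and, for three exponents, with Lemma \ref{lemma aux3}. Your proposal contains no substitute for either the equisingular-deformation step or the $\mu(\gamma)$ formula, so the technical heart of the proof is missing.
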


Clearly when $\gamma_f$ has three characteristic exponents it is not a quasi-homogeneous curve. However, we note that the number $s$ in Theorem \ref{mainresult1} is determined by the weights and degrees of $f$ (see Remark \ref{remarkons}). In this way, Theorem \ref{mainresult1} shows that in fact the embedded topological type of the transversal slice of $f$ is determined by the weights and degrees of $f$, even in the case where the curve $\gamma_f$ is not quasi-homogeneous.

In the second part of this work, we present two natural consequences of Theorem \ref{mainresult1}. More precisely, in Section \ref{sec4} we show that a necessary condition for a corank $1$ finitely determined map germ $g:(\mathbb{C}^2,0)\rightarrow (\mathbb{C}^3,0)$ to be quasi-homogeneous (in a suitable system of coordinates) is that the transversal slice of $g$ must be have either two or three characteristic exponents (Corollary \ref{cor1}). 

Also in Section \ref{sec4}, we show that any one-parameter unfolding $F=(f_t,t)$ of $f$ which adds only terms of the same degrees as the degrees of $f$ is Whitney equisingular (Corollary \ref{cor2}). We also consider some natural questions and provide counterexamples for them. For instance, we show that Question $1$ has a negative answer in corank $2$ case (see Example \ref{excorank2}). We also show that Question $1$ has a negative answer for corank $1$ map germs from $(\mathbb{C}^n,0) \rightarrow (\mathbb{C}^{n+1},0)$ with $n\geq 3$ (see Example \ref{exhigherdim}). We finish presenting examples to illustrates our results, more precisely, we describe the embedded topological type of any quasi-homogeneous map germ of Mond's list (see Section \ref{sec4}).

\section{Preliminaries}

$ \ \ \ \ $ Throughout this paper, given a finite map $f:\mathbb{C}^2\rightarrow \mathbb{C}^3$, $(x,y)$ and $(X,Y,Z)$ are used to denote systems of coordinates in $\mathbb{C}^2$ (source) and $\mathbb{C}^3$ (target), respectively. Also, $\mathbb{C} \lbrace x_1,\cdots,x_n \rbrace \simeq \mathcal{O}_n$ denotes the local ring of convergent power series in $n$ variables. The letters $U,V$ and $W$ are used to denote open neighborhoods of $0$ in $\mathbb{C}^2$, $\mathbb{C}^3$ and $\mathbb{C}$, respectively. For unfoldings, we will use $T$ to denote the parameter space, which is also an open neighborhood of $0$ in $\mathbb{C}$. Throughout, we use the standard notation of singularity theory as the reader can find in Wall's survey paper \cite{wall}.

\subsection{Double point curves for corank 1 map germs}\label{sec2.1}

$ \ \ \ \ $ In this section, we deal only with of corank $1$ map germs. We follow \cite[Sec. 1]{ref7} and \cite[Sec. 3]{mond6}. 

Let $f:(\mathbb{C}^2,0)\rightarrow (\mathbb{C}^3,0)$ be a finite corank $1$ map germ. As we said in Introduction, up to $\mathcal{A}-$equivalence, $f$ can be written in the form $f(x,y)=(x,p(x,y),q(x,y))$. In this case, the lifting of the double point space is defined as:

\begin{center}
$D^2(f)=V \displaystyle \left( x-x',\dfrac{p(x,y)-p(x,y')}{y-y'}, \dfrac{q(x,y)-q(x,y')}{y-y'} \right)$
\end{center}

\noindent where $(x,y,x',y')$ are coordinates of $\mathbb{C}^2 \times \mathbb{C}^2$ and $V(h_1,\cdots,h_l)$ denotes the set of common zeros of $h_1, \cdots, h_l$.

Once the lifting $D^2(f) \subset \mathbb{C}^2 \times \mathbb{C}^2$ is defined, we now consider its image on $\mathbb{C}^2$ by the projection $\pi:\mathbb{C}^2 \times \mathbb{C}^2 \rightarrow \mathbb{C}^2$ onto the first factor, which will be denoted by $D(f)$. For our purposes, the most appropriate structure for $D(f)$ is the one given by the Fitting ideals, because it relates in a simple way the properties of the spaces $D^2(f)$ and $D(f)$. Also, this structure is well behaved by deformations. 

More precisely, given a finite morphism of complex spaces $g:X\rightarrow Y$ the push-forward $g_{\ast}\mathcal{O}_{X}$ is a coherent sheaf of $\mathcal{O}_{Y}-$modules (see \cite[Ch. 1]{grauert}) and to it we can (as in \rm\cite[Sec. 1]{ref13}) associate the Fitting ideal sheaves $\mathcal{F}_{k}(g_{\ast}\mathcal{O}_{X})$. Notice that the support of $\mathcal{F}_{0}(g_{\ast}\mathcal{O}_{X})$ is just the image $g(X)$. Analogously, if $g:(X,x)\rightarrow(Y,y)$ is a finite map germ then we denote also by $ \mathcal{F}_{k}(g_{\ast}\mathcal{O}_{X})$ the \textit{k}th Fitting ideal of $\mathcal{O}_{X,x}$ as $\mathcal{O}_{Y,y}-$module. 

Another important space to study the topology of $f(\mathbb{C}^{2})$ is the double point curve in the target, that is, the image of $D(f)$ by $f$, denoted by $f(D(f))$, which will also be consider with the structure given by Fitting ideals. In this way, we have the following definition:

\begin{definition} Let $f:U \subset \mathbb{C}^2 \rightarrow V \subset \mathbb{C}^3 $ be a finite mapping.

\begin{flushleft}
 (a) Let ${\pi}|_{D^2(f)}:D^2(f) \subset U \times U \rightarrow U$ be the restriction to $D^2(f)$ of the projection $\pi$. The \textit{double point space} is the complex space
\end{flushleft}

\begin{center}
$D(f)=V(\mathcal{F}_{0}({\pi}_{\ast}\mathcal{O}_{D^2(f)}))$.
\end{center}

\noindent Set theoretically we have the equality $D(f)=\pi(D^{2}(f))$.

\begin{flushleft}
(b) The \textit{double point space in the target} is the complex space $f(D(f))=V(\mathcal{F}_{1}(f_{\ast}\mathcal{O}_2))$. Notice that the underlying set of $f(D(f))$ is the image of $D(f)$ by $f$. 
\end{flushleft}

\noindent (c) Given a finite map germ $f:(\mathbb{C}^{2},0)\rightarrow (\mathbb{C}^3,0)$, \textit{the germ of the double point space} is the germ of complex space $D(f)=V(F_{0}(\pi_{\ast}\mathcal{O}_{D^2(f)}))$. \textit{The germ of the double point space in the target} is the germ of the complex space $f(D(f))=V(F_{1}(f_{\ast}\mathcal{O}_2))$.

\end{definition}

\begin{remark}\label{remarkdimdf} If $f:U \subset \mathbb{C}^2 \rightarrow V \subset \mathbb{C}^3 $ is finite and generically $1$-to-$1$, then $D^2(f)$ is Cohen-Macaulay and has dimension $1$ \rm(\textit{see} \rm\cite[\textit{Prop.} \rm 2.1]{ref9})\textit{. Hence, $D^2(f)$, $D(f)$ and $f(D(f))$ are curves. In this case, without any confusion, we also call these complex spaces by the ``lifting of the double point curve'', the ``double point curve'' and the ``image of the double point curve'', respectively.}
\end{remark}

\subsection{Finite determinacy and the invariant $C(f)$}

\begin{definition}(a) Two map germs $f,g:(\mathbb{C}^2,0)\rightarrow (\mathbb{C}^3,0)$ are $\mathcal{A}$-equivalent, denoted by $g\sim_{\mathcal{A}}f$, if there exist germs of diffeomorphisms $\Phi:(\mathbb{C}^2,0)\rightarrow (\mathbb{C}^2,0)$ and $\Psi:(\mathbb{C}^3,0)\rightarrow (\mathbb{C}^3,0)$, such that $g=\Psi \circ f \circ \Phi$.

\begin{flushleft}
 (b) A map germ $f:(\mathbb{C}^2,0) \rightarrow (\mathbb{C}^3,0)$ is finitely determined ($\mathcal{A}$-finitely determined) if there exists a positive integer $k$ such that for any $g$ with $k$-jets satisfying $j^kg(0)=j^kf(0)$ we have $g \sim_{\mathcal{A}}f$.
 \end{flushleft} 

\end{definition}

\begin{remark}\label{remarktriplepoints} Consider a finite map germ $f:(\mathbb{C}^2,0)\rightarrow (\mathbb{C}^3,0)$. By Mather-Gaffney criterion \rm(\cite[\textit{Th}. \rm 2.1]{wall})\textit{, $f$ is finitely determined if and only if there is a finite representative $f:U \rightarrow V$, where $U\subset \mathbb{C}^2$, $V \subset \mathbb{C}^3$ are open neighbourhoods of the origin, such that $f^{-1}(0)=\lbrace 0 \rbrace$ and the restriction $f:U \setminus \lbrace 0 \rbrace \rightarrow V \setminus \lbrace 0 \rbrace$ is stable.} 

\textit{This means that the only singularities of $f$ on $U \setminus \lbrace 0 \rbrace$ are cross-caps (or Whitney umbrellas), transverse double and triple points. By shrinking $U$ if necessary, we can assume that there are no cross-caps nor triple points in $U$. Then, since we are in the nice dimensions of Mather }\rm(\cite[\textit{p}. \rm 208]{mather})\textit{, we can take a stabilization of $f$}, 

\begin{center}
$F:U \times T \rightarrow \mathbb{C}^4$, $F(x,y,t)=(f_{t}(x,y),t)$, 
\end{center}

\noindent \textit{where $T$ is a neighbourhood of $0$ in $\mathbb{C}$. It is well known that the number $C(f):= \sharp $ of cross-caps of $f_t$ is independent of the particular choice of the stabilization and it is also an analytic invariant of $f$} \rm(\textit{see for instance} \rm\cite{mond7}). \textit{One can calculate $C(f)$ as the codimension of the ramification ideal $J(f)$ in $\mathcal{O}_2$, that is:}

\begin{equation}\label{eq18}
C(f)=dim_{\mathcal{C}}\dfrac{\mathcal{O}_2}{J(f)}.
\end{equation}

\end{remark}

We remark that the space $D(f)$ plays a fundamental role in the study of the finite determinacy. In \cite[Th. 2.14]{ref7}, Marar and Mond presented necessary and sufficient conditions for a map germ $f:(\mathbb{C}^n,0)\rightarrow (\mathbb{C}^p,0)$ with corank $1$ to be finitely determined in terms of the dimensions of $D^2(f)$ and other multiple points spaces. In \cite{ref9}, Marar, Nu\~{n}o-Ballesteros and Pe\~{n}afort-Sanchis extended this criterion of finite determinacy to the corank $2$ case. They proved the following result.

\begin{theorem}\rm(\cite{ref9}, \cite{ref7})\label{criterio} \textit{
Let $f:(\mathbb{C}^2,0)\rightarrow(\mathbb{C}^{3},0)$ be a finite and generically $1$ $-$ $1$ map germ. Then $f$ is finitely determined if and only if the Milnor number of $D(f)$ at $0$ is finite.}
\end{theorem}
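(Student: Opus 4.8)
The plan is to deduce both implications from the Mather--Gaffney geometric criterion recalled in Remark~\ref{remarktriplepoints}, together with the fact (Remark~\ref{remarkdimdf}) that for a finite, generically $1$-to-$1$ germ $f$ the double point space $D(f)$ is always a curve, and hence a plane curve $V(\lambda)\subset(\mathbb{C}^2,0)$ cut out by a single $\lambda\in\mathcal{O}_2$. Thus the Milnor number $\mu_0(D(f))$ of $D(f)$ at $0$ is the usual plane curve Milnor number of $\lambda$, which is finite precisely when $\lambda$ has an isolated critical point, i.e. when $D(f)$ is a \emph{reduced} curve with an \emph{isolated} singularity at the origin. So the theorem reduces to showing that this geometric condition on $D(f)$ is equivalent to the multiple-point-space conditions shown in Marar--Mond \cite{ref7} (corank $1$) and Marar--Nu\~{n}o-Ballesteros--Pe\~{n}afort-Sanchis \cite{ref9} (arbitrary corank) to be equivalent to finite determinacy.

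For ``$f$ finitely determined $\Rightarrow$ $\mu_0(D(f))<\infty$'' I would argue directly and geometrically. Choose a representative $f\colon U\to V$ as in Remark~\ref{remarktriplepoints}; by that remark we may shrink $U$ so that $f$ has no cross-caps and no triple points on $U\setminus\{0\}$, hence $f|_{U\setminus\{0\}}$ is an immersion with at worst transverse (ordinary) double points. At such points $D(f)$ is smooth and reduced, so $0$ is an isolated singular point of the reduced curve $D(f)$ and therefore $\mu_0(D(f))<\infty$. Reducedness of $D(f)$ at $0$ itself is part of the Marar--Mond description of the double point space of a finitely determined germ, via the compatibility of the Fitting structures on $D^2(f)$ and $D(f)$.

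For the converse ``$\mu_0(D(f))<\infty\Rightarrow f$ finitely determined'' the key step is to translate the geometry of $D(f)$ into multiple-point data. Finiteness of $\mu_0(D(f))$ means $D(f)$ is reduced with an isolated singularity; since $\pi|_{D^2(f)}\colon D^2(f)\to D(f)$ is finite and the Fitting ideals tie together $\mathcal{O}_{D^2(f)}$, $\mathcal{O}_{D(f)}$ and the deeper multiple-point sheaves, one shows this forces $D^2(f)$ to be a reduced curve with an isolated singularity, the triple point space $D^3(f)$ to be $0$-dimensional (or empty), and, when $f$ has corank $2$, the locus of corank $2$ points of $f$ to be $0$-dimensional as well --- for if any of these loci were positive-dimensional then $D(f)$ would be non-reduced or singular along a curve. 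These are exactly the hypotheses of \cite[Th.~2.14]{ref7} in the corank $1$ case and of the finite determinacy criterion of \cite{ref9} in general, and the conclusion follows.

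The main obstacle is precisely this last translation, which is the substantive content of \cite{ref7,ref9}: one has to control \emph{every} multiple point space of $f$ --- and, in the corank $2$ case, the corank $2$ stratum --- using the single invariant $\mu_0(D(f))$, which requires the detailed bookkeeping of how the Fitting ideal defining $D(f)$ records the deeper multiple-point structure of $f$. Granting that, both directions follow by combining Mather--Gaffney with the Marar--Mond and Marar--Nu\~{n}o-Ballesteros--Pe\~{n}afort-Sanchis criteria.
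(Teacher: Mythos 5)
The paper offers no proof of Theorem \ref{criterio}: it is quoted directly from \cite{ref7} and \cite{ref9}, and your proposal---whose substantive steps (in particular the entire converse direction) are explicitly deferred to exactly those references---is essentially the same approach, namely an attribution rather than an independent argument. Your added remarks for the forward direction (Mather--Gaffney stability off the origin, hence a reduced double point curve away from $0$, hence a reduced defining equation and finite Milnor number at $0$) are sound, but be aware that invoking the criterion of \cite{ref9} for the converse is invoking the very statement being proved, so beyond the citation nothing new is established---which is all the paper itself does.
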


\subsection{Identification and Fold components of $D(f)$}

$ \ \ \ \ $ When $f:(\mathbb{C}^2,0)\rightarrow (\mathbb{C}^3,0)$ is finitely determined, the restriction $f_{|D(f)}$ of (a representative) $f$ to $D(f)$ is generically $2$-to-$1$ (i.e; $2$-to-$1$ except at $0$). On the other hand, the restriction of $f$ to an irreducible component $D(f)^i$ of $D(f)$ is either generically $1$-to-$1$ or generically $2$-to-$1$. This motivates us to give the following definition which is from \cite[Def. 4.1]{otoniel3} (see also \cite[Def. 2.4]{otoniel1}).

\begin{definition}\label{typesofcomp} Let $f:(\mathbb{C}^2,0)\rightarrow (\mathbb{C}^3,0)$ be a finitely determined map germ. Let $f:U\rightarrow V$ be a representative of $f$ and consider an irreducible component $D(f)^j$ of $D(f)$.

\begin{flushleft}
(a) If the restriction ${f_|}_{D(f)^j}:D(f)^j\rightarrow V$ is generically $1$ $-$ $1$, we say that $D(f)^j$ is an \textit{identification component of} $D(f)$. In this case, there exists an irreducible component $D(f)^i$ of $D(f)$, with $i \neq j$, such that $f(D(f)^j)=f(D(f)^i)$. We say that $D(f)^i$ is the \textit{associated identification component to} $D(f)^j$ or that the pair $(D(f)^j, D(f)^i)$ is a \textit{pair of identification components of} $D(f)$.
\end{flushleft}
\begin{flushleft}
(b) If the restriction ${f_|}_{D(f)^j}:D(f)^j\rightarrow V$ is generically $2$ $-$ $1$, we say that $D(f)^j$ is a \textit{fold component of} $D(f)$.
\end{flushleft}
 
\end{definition}

We would like to remark that the terminology ``fold component'' in Definition \ref{typesofcomp}(b) was chosen by the author in \cite[Def. 4.1]{otoniel3} in analogy to the restriction of $f$ to $D(f)=V(x)$ when $f$ is the cross-cap $f(x,y)=(x,y^2,xy)$, which is a fold map germ. The following example illustrates the two types of irreducible components of $D(f)$ presented in Definition \ref{typesofcomp}.

\begin{example}\label{example}
\textit{Let $f(x,y)=(x,y^2,xy^3-x^5y)$ be the singularity $C_5$ of Mond's list} \rm\cite{mond6}. \textit{Note that $D(f)=V(xy^2-x^5)$. Thus we have that $D(f)$ has three irreducible components given by}

\begin{center}
$D(f)^1=V(x^2-y), \ \ \ $ $D(f)^2=V(x^2+y) \ \ $ and $ \ \ D(f)^3=V(x)$. 
\end{center}

\textit{Notice that $D(f)^3$ is a fold component and $(D(f)^1$, $D(f)^2)$ is a pair of identification components. Also, we have that $f(D(f)^3)=V(X,Z)$ and $f(D(f)^1)=f(D(f)^2)=V(Y-X^4,Z)$ (see Figure \rm\ref{figura1}\textit{)}.}

\begin{figure}[h]
\centering
\includegraphics[scale=0.21]{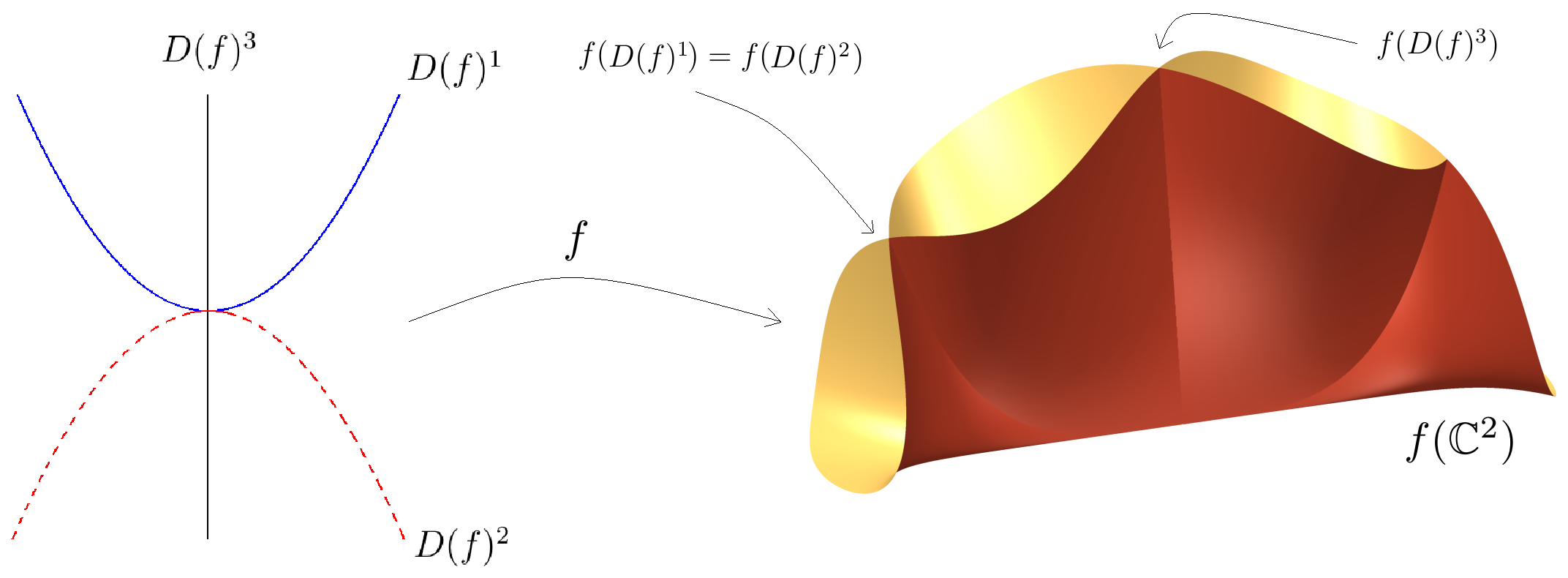}  
\caption{Identification and fold components of $D(f)$ (real points)}\label{figura1}
\end{figure}
\end{example}

\section{The slice of a quasi-homogeneous map germ from $\mathbb{C}^2$ to $\mathbb{C}^3$}\label{sec3}

$ \ \ \ \  $ In \cite{slice}, Marar and Nuño-Ballesteros studied the generic hyperplane sections of $f(\mathbb{C}^2)$ for a map germ map germ $f$ from $(\mathbb{C}^2,0)$ to $(\mathbb{C}^3,0)$ of corank $1$. Following their paper, we present in this section their notion of transversal slice for $f$. 

Let $f:(\mathbb{C}^2,0)\rightarrow (\mathbb{C}^3,0)$ be a corank $1$, finite and generically $1-1$ holomorphic map germ. In this case, the image of the differential $df_0(\mathbb{C}^2)$ is a line in $\mathbb{C}^3$ through the origin. Also, as we said in Remark \ref{remarkdimdf}, the double point space in the target $f(D(f))$ is a curve in $\mathbb{C}^3$.

\begin{definition}\label{defslice} We say that a plane $H_0 \subset \mathbb{C}^3$ through the origin is \textit{generic} for $f$ if the following three conditions hold:

\begin{flushleft}
$ \ \ \ $ (1) $H_0 \cap df_0(\mathbb{C}^2)=\lbrace 0 \rbrace$,\\
$ \ \ \ $ (2) $H_0 \cap f(D(f))=\lbrace 0 \rbrace$, and\\
$ \ \ \ $ (3) $H_0 \cap C_0(f(D(f)))=\lbrace 0 \rbrace$,\\
\end{flushleft}

\noindent where $C_0(f(D(f)))$ denotes the Zariski tangent cone of $f(D(f))$ at $0$. 

$ \ \ $

We remark that the set of generic planes for $f$ is a non-empty Zariski open subset of the Grasmannian of planes of $\mathbb{C}^3$. \textit{In general, the analytic type of the curve $H\cap f(\mathbb{C}^2)$ may depend on the choice of the coordinates and the generic plane $H$, but its embedded topological type does not} \rm(\textit{see} \rm\cite{pellikaan1}). \textit{Hence, for a generic plane $H$ we will denote the plane curve $H\cap f(\mathbb{C}^2)$ by $\gamma_f$ (or $\gamma$, for short, if the context is clear) and it usual to call it the transverse slice of $f$}.
\end{definition}

We would like to study the transversal slice of quasi-homogeneous map germs. Thus, it is convenient to present a  precise definition of this kind of maps.

\begin{definition}\label{defquasihomog} A polynomial $p(x_1,\cdots,x_n)$ is \textit{quasi-homogeneous} if there are positive integers $w_1,\cdots,w_n$, with no common factor and an integer $d$ such that $p(k^{w_1}x_1,\cdots,k^{w_n}x_x)=k^dp(x_1,\cdots,x_n)$. The number $w_i$ is called the weight of the variable $x_i$ and $d$ is called the weighted degree of $p$. In this case, we say $p$ is of type $(d; w_1,\cdots,w_n)$.
\end{definition}

Definition \ref{defquasihomog} extends to polynomial map germs $f:(\mathbb{C}^n,0)\rightarrow (\mathbb{C}^p,0)$ by just requiring each coordinate function $f_i$ to be quasi-homogeneous of type $(d_i; w_1,\cdots,w_n)$, for fixed weights $w_1,\cdots,w_n$. In particular, for a quasi-homogeneous map germ $f:(\mathbb{C}^2,0)\rightarrow (\mathbb{C}^3,0)$ we say that it is quasi-homogeneous of type $(d_1,d_2,d_3; w_1,w_2)$. 

Note that if $f:(\mathbb{C}^2,0)\rightarrow (\mathbb{C}^3,0)$ is a corank $1$ quasi-homogeneous map germ in the normal form (\ref{eq15}), then we can write $p(x,y)=\lambda_1 y^n +x\tilde{p}(x,y)$ and $q(x,y)=\lambda_2y^m+x\tilde{q}(x,y)$, for some $n,m \in \mathbb{N}$, $\lambda_i \in \mathbb{C}$ and $\tilde{p},\tilde{q} \in \mathcal{O}_2$ with $\tilde{p}(x,0)=\tilde{q}(x,0)=0$. This is explained more precisely in the following lemma, where a normal form for $f$ which is more convenient for our purposes is presented. To simplify the notation, in the following we will write $p,q$ in place of $\tilde{p},\tilde{q}$ and set $a:=w_1$ and $b:=w_2$ which will certainly not cause notation confusion.

\begin{lemma}\rm(\cite[\textit{Lemma} \rm 2.11]{otonielformulaJ})\label{lemma corank 1} \textit{Let $g(x,y)=(g_1(x,y),g_2(x,y),g_3(x,y))$ be a corank $1$, finitely determined, quasi-homogeneous map germ of type $(d_1,d_2,d_3; a,b)$. Then $g$ is $\mathcal{A}$-equivalent to a quasi-homogeneous map germ $f$ with type $(d_{i_1}=a,d_{i_2},d_{i_3};a,b)$, which is written in the form}

\begin{equation}\label{eq14} 
f(x,y)=(x, y^n+xp(x,y), \beta y^m+ xq(x,y)),
\end{equation}

\noindent \textit{for some integers $n,m\geq 2$, $\beta \in \mathbb{C}$, $p,q \in \mathcal{O}_2$, $p(x,0)=q(x,0)=0$, where $(d_{i_1},d_{i_2},d_{i_3})$ is a permutation of $(d_1,d_2,d_3)$ such that $d_{i_2}\leq d_{i_3}$.}
\end{lemma}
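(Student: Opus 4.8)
The plan is to normalize $g$ in two stages: first rearrange the three coordinate functions so that the one of smallest weighted degree is placed first and can be turned into the coordinate $x$, and then clean up the remaining two coordinate functions to bring them into the stated shape while tracking the weighted homogeneity throughout. Since $g$ has corank $1$, one of the three linear parts of the $g_i$ is nonzero; because $g$ maps into $(\mathbb{C}^3,0)$ with corank $1$, exactly one of $a,b$ equals the weight of the variable appearing linearly, and after permuting the target coordinates we may assume it is $g_{i_1}$ that contains a term of weighted degree equal to one of the source weights. In fact, finite determinacy forces $g$ to have a nonzero $1$-jet (a finitely determined map germ $(\mathbb{C}^2,0)\to(\mathbb{C}^3,0)$ cannot have zero differential of rank $0$ since then $D(f)$ would not have finite Milnor number, by Theorem \ref{criterio}), so one coordinate, say after permutation $g_{i_1}$, has a linear term; by a linear change of source coordinates we may take that linear term to be $x$, and by a quasi-homogeneous change of target coordinates subtract off the rest, so that $d_{i_1}=a$ and the first coordinate is exactly $x$. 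This is the first reduction.

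Next I would treat the remaining two coordinates $g_{i_2},g_{i_3}$, now functions of $(x,y)$ that are quasi-homogeneous of types $(d_{i_2};a,b)$ and $(d_{i_3};a,b)$ with $d_{i_2}\le d_{i_3}$, and both lying in $m_2^2$ after the target translation. Writing such a quasi-homogeneous polynomial in powers of $y$, the pure-$y$ part is a monomial $\lambda y^{n}$ with $nb=d_{i_2}$ (respectively $\beta' y^{m}$ with $mb=d_{i_3}$), and everything else is divisible by $x$. Thus $g_{i_2}=\lambda y^{n}+x p(x,y)$ and $g_{i_3}=\beta' y^{m}+x q(x,y)$ with $p(x,0)=q(x,0)=0$ forced by pulling out the $y$-pure part. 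If $\lambda\ne 0$ we rescale the target coordinate $Y$ to make the leading coefficient $1$; if $\lambda=0$ then $g_{i_2}$ is divisible by $x$, and I would argue this contradicts finite determinacy (the map would factor through a map whose double point locus fails the Milnor-number finiteness of Theorem \ref{criterio} — concretely, $g_{i_2}=x p$ makes the ideal defining $D^2(g)$ degenerate along $x=x'=0$), so in fact $n\ge 2$ and $\lambda\neq 0$ can be assumed. The same reasoning gives $m\ge 2$. One keeps $\beta$ (possibly zero) because the $y$-pure part of the last coordinate need not be present. Relabelling $p,q$ and setting $a=w_1$, $b=w_2$ yields exactly \eqref{eq14}.

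The step I expect to be the main obstacle is the careful justification that after the first reduction one genuinely has $d_{i_1}=a$ (equivalently, that the linearly-appearing source variable is the one of weight $a$, not $b$) and that both $n,m\ge 2$ — i.e., that neither of the last two coordinates is divisible by $x$ and neither is linear in $y$. All three facts are where finite determinacy (via Theorem \ref{criterio}, the finiteness of the Milnor number of $D(f)$) must be invoked, rather than being purely formal consequences of quasi-homogeneity; the cited reference \cite[Lemma 2.11]{otonielformulaJ} presumably carries out precisely this bookkeeping, so I would follow that argument, checking that the permutation $(d_{i_1},d_{i_2},d_{i_3})$ of $(d_1,d_2,d_3)$ can always be chosen with $d_{i_1}=a$ and $d_{i_2}\le d_{i_3}$, and that the $\mathcal{A}$-equivalence used consists only of a linear source change together with a quasi-homogeneous target change (hence preserves weighted homogeneity with the same source weights $a,b$). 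The remaining manipulations — extracting $y$-pure parts, rescaling a coefficient to $1$ — are routine.
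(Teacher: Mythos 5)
Your overall strategy (permute the target coordinates, straighten the first component to $x$, split the other two components into a pure $y$-power plus a multiple of $x$, rescale) is certainly the natural one and is presumably close to the argument in \cite[Lemma 2.11]{otonielformulaJ}; note, however, that the paper under review offers no proof to compare with — Lemma \ref{lemma corank 1} is simply imported by citation — so your proposal must stand on its own, and it has genuine gaps precisely at the steps you yourself flag as the ones needing finite determinacy. First, the claim that $\lambda=0$ (the lower-degree remaining component divisible by $x$) contradicts finite determinacy is false: $g(x,y)=(x,xy,y^2)$ is quasi-homogeneous of type $(1,3,4;1,2)$, is the cross-cap up to a permutation of the target coordinates, hence finitely determined, yet its middle (smaller-degree) component is divisible by $x$, and $D^2(g)=V(x'-x,\,x,\,y+y')$ is a smooth curve contained in $\{x=x'=0\}$ — no degeneration whatsoever. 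What finiteness actually forbids is that \emph{both} $g_{i_2}$ and $g_{i_3}$ be divisible by $x$ (otherwise $V(x)\subset g^{-1}(0)$); deciding which of the two carries the pure $y$-power and reconciling that with the asserted ordering $d_{i_2}\leq d_{i_3}$ and with $d_{i_1}=a$ is exactly the bookkeeping you defer to the reference, and the example above shows it is not routine: there the only available normal form is $(x,y^2,xy)$, whose degrees for the same weights are $(1,4,3)$, so the ordering/choice of weights genuinely has to be addressed. Also, $n,m\geq 2$ follows immediately from corank $1$ (a linear $y$-term in the second or third component would give $dg_0$ rank $2$), not from any Milnor-number argument; and your aside that a rank-zero differential is incompatible with finite determinacy is wrong — corank $2$ finitely determined germs exist, cf. Example \ref{excorank2} of this very paper — though that aside is harmless since corank $1$ is a hypothesis.

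Second, the assertion that $p(x,0)=q(x,0)=0$ is ``forced by pulling out the $y$-pure part'' is incorrect: a quasi-homogeneous $g_{i_2}$ may contain pure monomials $x^{\alpha}$ with $a\alpha=d_{i_2}$ (e.g. $(x,\,y^2+x^3,\,xy)$, quasi-homogeneous of type $(2,6,5;2,3)$ and $\mathcal{A}$-equivalent to the cross-cap), and these must be removed by the weight-preserving target change $Y\mapsto Y-cX^{\alpha}$, a step absent from your argument. Similarly, making the first component exactly $x$ is done by the \emph{source} change $\tilde{x}:=g_{i_1}$ (legitimate because $g_{i_1}$ has a nonzero linear term in $x$ and is quasi-homogeneous of degree $a$, so the weights are preserved), not by ``subtracting off the rest'' with a target change: the higher-order part of $g_{i_1}$ is a function of the source variables and is in general not expressible through the other components. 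All of these points are fixable, but as written the two places where you invoke finite determinacy are unjustified (one of them by a false claim), and the normalization $p(x,0)=q(x,0)=0$ and the $d_{i_1}=a$/ordering issues are left to the cited lemma rather than proved.
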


In the sequel, $\mu(X,0)$ and $m(f(D(f)))$ denotes respectively the Milnor number and the multiplicity of $X$ at $0$. The multiplicity of $f$ is the multiplicity of $f(\mathbb{C}^2)$ at $0$. We note that if $f$ is in the normal form (\ref{eq14}), then $m(f(\mathbb{C}^2))=n$. In this way, we divide the proof of Theorem \ref{mainresult1} into two parts. In the first part we present a proof of the result in the case where the multiplicity of $f$ is $2$ (see Proposition \ref{mult 2}). In the second part we deal with the case of multiplicity greater than $2$ (see Proposition \ref{mult greater 2}). In both cases, we will need the following lemma.

\begin{lemma}\label{milnorgamma} Let $f:(\mathbb{C}^2,0)\rightarrow (\mathbb{C}^3,0)$ be a corank $1$, finitely determined, quasi-homogeneous map germ of type $(d_1=a,d_2,d_3;a,b)$, with $d_2\leq d_3$, and write it as in Lemma \rm\ref{lemma corank 1}, \textit{that is, in the form} 

\begin{center}
$f(x,y)=(x, y^n+xp(x,y), \beta y^m+ xq(x,y))$. 
\end{center}
 
\noindent \textit{Let $\gamma$ be the transversal slice of $f$. Then}

\begin{center}
 $\mu(\gamma,0)=\dfrac{1}{ab^2}\bigg((d_2-b)(d_3-b)c+s a b (d_2-c) \bigg)$
 \end{center} 

\noindent \textit{where $c=min\lbrace a,d_2 \rbrace$, $s=0$ if the restriction of $f$ to the line $x=0$ is generically $1-$to$-1$ or $s=1$, otherwise.}

\end{lemma}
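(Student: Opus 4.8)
The plan is to compute $\mu(\gamma,0)$ via the conductor formula for a plane curve, using the fact that $\gamma$ is the image of a parametrization whose combinatorics are governed by the weights and degrees of $f$. Since $f$ has corank $1$, the transversal slice $\gamma_f$ is an irreducible plane curve germ; let $n_\gamma:(\mathbb{C},0)\to(\mathbb{C}^2,0)$ be its normalization. Recall that for an irreducible plane curve one has $\mu(\gamma,0)=2\delta(\gamma,0)-r+1=2\delta(\gamma,0)$, where $\delta$ is the delta invariant and $r=1$ is the number of branches. So it suffices to compute $\delta(\gamma,0)$, i.e. the length of $\mathcal{O}_{n_\gamma}/\mathcal{O}_\gamma$.

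The first step is to produce an explicit parametrization of $\gamma$. Writing $f$ as in Lemma \ref{lemma corank 1} in the form $f(x,y)=(x,y^n+xp(x,y),\beta y^m+xq(x,y))$, a generic hyperplane section is obtained by slicing with a generic plane through the origin; because $\delta$ (hence $\mu$) of $\gamma$ depends only on the embedded topological type, I may choose a convenient generic plane and read off the parametrization of $\gamma$ from the restriction of $f$ to a generic line in the source, suitably interpreted. This is where the quantities $c=\min\{a,d_2\}$ and the dichotomy defining $s$ enter: the leading exponents of the parametrizing series are determined by whether $a\leq d_2$ or $a>d_2$ (i.e. which of $x$, $y^n$ dominates), and whether the restriction $f|_{x=0}$, namely $u\mapsto(u^n,\beta u^m)$, is generically one-to-one controls whether an extra identification ($s=1$) contributes a pair of branches of $D(f)$ that collapses in the slice. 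I would make this precise by invoking the description of $D(f)$ and its identification/fold components from Section \ref{sec2.1}, together with the known relation (from \cite{slice}) between the multiplicity and $\delta$ of $\gamma$ and the double point data of $f$.

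The second step is the actual computation of $\delta(\gamma,0)$ from the parametrization. Once $\gamma$ is parametrized by a pair of quasi-homogeneous (or nearly quasi-homogeneous) series in a uniformizer $u$, with orders expressible through $a,b,d_2,d_3$, the semigroup of the branch can be analyzed and $\delta$ computed as $\#(\mathbb{N}\setminus\Gamma_\gamma)$. Alternatively — and this is likely the cleaner route — I would use the formula expressing $\delta(\gamma)$ (equivalently the double point number of the slice) in terms of the number $C(f)$ of cross-caps and the double point curve $D(f)$ of $f$, combined with the quasi-homogeneous formula for $C(f)$ in terms of $(a,b,d_2,d_3)$ (cf. \cite{otonielformulaJ}, whose Lemma 2.11 is already cited here), and with the multiplicity of $f(D(f))$. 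Bookkeeping the weighted degrees through the Fitting-ideal presentation of $D(f)$ yields $\delta(\gamma,0)=\frac{1}{2ab^2}\big((d_2-b)(d_3-b)c+sab(d_2-c)\big)$, and doubling gives the claimed $\mu(\gamma,0)$.

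The main obstacle I anticipate is the first step: correctly identifying the parametrization of the \emph{generic} slice when the coordinate plane $V(X)$ is not generic (as the $H_2$ example in the introduction shows it need not be), and correctly tracking where the parameter $s$ comes from. Concretely, I expect the subtlety to be in verifying that passing to a genuinely generic plane changes only the embedded topological type in a controlled way, and in checking that the contribution of the components of $D(f)$ on which $f$ is generically two-to-one (the fold components) versus the identification components is exactly captured by the $(d_2-c)$ term weighted by $s$. Handling the two regimes $a\leq d_2$ and $a>d_2$ uniformly via $c=\min\{a,d_2\}$ will require care, and the edge cases where some weighted degree is a multiple of another (so that extra terms in $p$ or $q$ are forced) will need separate attention to confirm they do not affect $\mu(\gamma,0)$.
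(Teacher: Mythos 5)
Your framework (reduce $\mu(\gamma,0)=2\delta(\gamma,0)$ to double-point data of $f$ and then feed in quasi-homogeneous formulas) is in the right spirit, but the quantitative heart of the lemma is asserted rather than proved. The paper's own proof is two lines: for a generic plane $H$ the preimage $f^{-1}(\gamma)=f^{-1}(H)$ is a \emph{smooth} curve germ (since $p,q\in m_2^2$, genericity condition (1) forces the defining equation of $f^{-1}(H)$ to have nonzero linear part in $x$), so $\mu(f^{-1}(\gamma),0)=0$; then \cite[Lemma 5.2]{ref9} relates $\mu(\gamma,0)$, $\mu(f^{-1}(\gamma),0)$ and the slice invariant $J$ (essentially $\mu(\gamma,0)=\mu(f^{-1}(\gamma),0)+2J$), and \cite[Th. 3.2]{otonielformulaJ} is precisely the closed formula for $J$ of a quasi-homogeneous corank $1$ germ --- that is where $c=\min\{a,d_2\}$ and the dichotomy $s\in\{0,1\}$ come from. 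In your proposal the sentence ``bookkeeping the weighted degrees through the Fitting-ideal presentation of $D(f)$ yields $\delta(\gamma,0)=\frac{1}{2ab^2}\big((d_2-b)(d_3-b)c+sab(d_2-c)\big)$'' is exactly the statement that needs an argument (it is a theorem of the earlier paper, not a routine check), and the ingredients you name for it --- the quasi-homogeneous formula for $C(f)$ together with the multiplicity of $f(D(f))$ --- are not known to determine $\delta(\gamma,0)$ and no formula of that shape is exhibited. For instance, the germs $S_k=(x,y^2,y^3+x^{k+1}y)$ all have slice a cusp ($\delta(\gamma)=1$) while $C(f)=k+1$ grows, so the cross-cap number carries almost none of the relevant information; the correction term $sab(d_2-c)$ is governed by whether $f|_{V(x)}$ is generically one-to-one (equivalently whether $V(x)$ is a component of $D(f)$), which is exactly what the invariant $J$ of \cite{slice} records. (Incidentally, \cite{otonielformulaJ} is the reference for $J$, not for $C(f)$; the formulas for $C$ and $T$ are Mond's \cite{mondformulas}.)

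Your alternative route --- an explicit Puiseux parametrization of a genuinely generic slice followed by a semigroup computation --- is not wrong in principle, but it runs head-on into the obstacle you yourself flag: identifying a generic plane when $V(X)$ is not generic and tracking how $s$ enters. That analysis is the actual content of Propositions \ref{mult 2} and \ref{mult greater 2} (and of the unfolding argument in Case B there), which in the paper are proved \emph{using} this lemma as input; so following that route for the lemma means doing the whole paper's work here, and nothing in your sketch carries it out. To close the gap, either cite or reprove the relation $\mu(\gamma,0)=\mu(f^{-1}(\gamma),0)+2J$ together with the observation that $f^{-1}(H)$ is smooth for generic $H$, and then the quasi-homogeneous evaluation of $J$; or else genuinely construct the generic-slice parametrization in both regimes $a\leq d_2$ and $a>d_2$, which your proposal only promises.
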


\begin{proof} Since $(f^{-1}(\gamma),0)$ is a germ of smooth curve in $(\mathbb{C}^2,0)$, $\mu(f^{-1}(\gamma),0)=0$. Now the proof follows by \cite[Lemma 5.2]{ref9} and \cite[Th. 3.2]{otonielformulaJ}.\end{proof}

\subsection{The case of multiplicity $2$}

$ \ \ \ \ $ The following result give us a positive answer for Question $1$ in the case where the multiplicity of $f$ is $2$.

\begin{proposition}\label{mult 2} 
Let $f:(\mathbb{C}^2,0)\rightarrow (\mathbb{C}^3,0)$ be a corank $1$, finitely determined, quasi-homogeneous map germ of type $(d_1=a,d_2,d_3;a,b)$, with $d_2\leq d_3$ and suppose that $f$ has multiplicity $2$. Write $f$ as in Lemma \rm\ref{lemma corank 1}, \textit{that is, in the form} 

\begin{center}
$f(x,y)=(x, y^2+xp(x,y), \beta y^m+ xq(x,y))$, 
\end{center}

\noindent \textit{Let $\gamma$ be the transversal slice of $f$. Then $\gamma$ has only two characteristic exponents given by}

\begin{center}
$2$ $ \ \ $ \textit{and} $ \ \ $ $\dfrac{(d_3-b)\cdot c}{ab }+\dfrac{(2b-c)\cdot s}{b}+1$,
 \end{center}

\noindent \textit{where $c=min\lbrace a,d_2 \rbrace$, $s=0$ if the restriction of $f$ to the line $x=0$ is generically $1-$to$-1$ or $s=1$, otherwise.}
\end{proposition}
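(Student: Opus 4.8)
The plan is to exploit the fact that a corank $1$ germ has irreducible transversal slice $\gamma$, so that $\gamma$ admits a Puiseux parametrization and is determined (up to embedded topological equivalence) by its characteristic exponents. Since $f$ has multiplicity $2$ we already know $m(\gamma,0)=2$, so the multiplicity of the parametrization is $2$; hence $\gamma$ is parametrized by $u\mapsto(u^2,\phi(u))$ for some power series $\phi$, and a plane curve of multiplicity $2$ has exactly \emph{one} characteristic Puiseux pair, i.e. exactly two characteristic exponents, the first being $2$. So the only thing to pin down is the second characteristic exponent, call it $\beta_1$; equivalently the contact order of the two branches of the pullback, or equivalently the conductor/$\delta$-invariant of $\gamma$.

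The key identity I would use is the standard relation between the Milnor number and the characteristic exponents of an irreducible plane curve: for a branch of multiplicity $2$ with characteristic exponents $(2,\beta_1)$ one has $\mu(\gamma,0)=\delta(\gamma,0)\cdot 2=\beta_1-1$ (indeed $\delta=(\beta_1-1)/2$ and $\mu=2\delta$). Thus $\beta_1=\mu(\gamma,0)+1$. Now I invoke Lemma~\ref{milnorgamma}, which gives $\mu(\gamma,0)$ explicitly in terms of the weights and degrees: with $c=\min\{a,d_2\}$ and the parameter $s$ as in the statement,
\[
\mu(\gamma,0)=\frac{1}{ab^2}\Big((d_2-b)(d_3-b)c+sab(d_2-c)\Big).
\]
Here $n=2$, and since $p=y^n+xp(\cdot)$ is quasi-homogeneous of degree $d_2$ with $y$ of weight $b$, we have $d_2=2b$. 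Substituting $d_2=2b$ into the formula for $\mu(\gamma,0)$ gives
\[
\mu(\gamma,0)=\frac{1}{ab^2}\Big(b(d_3-b)c+sab(2b-c)\Big)=\frac{(d_3-b)c}{ab}+\frac{s(2b-c)}{b},
\]
and therefore $\beta_1=\mu(\gamma,0)+1=\dfrac{(d_3-b)c}{ab}+\dfrac{(2b-c)s}{b}+1$, which is exactly the claimed second characteristic exponent. Finitely determined plus corank $1$ guarantees $\gamma$ is an honest reduced irreducible curve singularity, so all these invariants are finite and the formulae make sense.

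The main obstacle is not the arithmetic but justifying that the number of characteristic exponents is \emph{exactly} two, and in particular that the second exponent is an integer of the stated form — this needs the fact that $\gcd(2,\beta_1)=1$ automatically (any plane branch of multiplicity $2$ has a single, automatically odd, characteristic term) together with the verification that $\mu(\gamma,0)$ produced by Lemma~\ref{milnorgamma} is indeed an integer after the substitution $d_2=2b$; one should check that the quasi-homogeneity constraints ($\gcd$ of weights $1$, finite determinacy forcing the relevant divisibilities) force $ab\mid(d_3-b)c$ and $b\mid(2b-c)s$, so that $\beta_1\in\mathbb{Z}$. A secondary point to address carefully is the reduction $d_2=2b$: one must observe that in the normal form $f(x,y)=(x,y^2+xp(x,y),\beta y^m+xq(x,y))$ the second coordinate being quasi-homogeneous of weighted degree $d_2$ with the pure term $y^2$ present forces $d_2=2b$, and that the multiplicity hypothesis $m(f(\mathbb{C}^2))=n=2$ is precisely what makes the leading $y$-exponent equal $2$. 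Once these are in place the result follows by combining Lemma~\ref{milnorgamma}, the $\mu$–$\delta$–characteristic-exponent dictionary for plane branches, and the elementary substitution above.
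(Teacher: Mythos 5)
Your proposal is correct and follows essentially the same route as the paper: multiplicity $2$ forces $\gamma$ to have exactly two characteristic exponents $2$ and an odd $k$, the plane-branch relation $\mu(\gamma,0)=k-1$ combined with Lemma~\ref{milnorgamma} and the substitution $d_2=2b$ then yields the stated formula. The integrality worry you raise resolves itself, since $k$ is by definition an integer and equals $\mu(\gamma,0)+1$ with $\mu(\gamma,0)\in\mathbb{Z}$, so no extra divisibility check is needed.
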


\begin{proof} Since $f$ has multiplicity $2$, $\gamma$ is a plane curve with multiplicity $2$. Therefore, $\gamma$ has only two characteristic exponents, namely, $2$ and $k$ with $2<k$ odd. By Lemma \ref{milnorgamma} we have that

\begin{equation}\label{eq16}
 k-1=\mu(\gamma,0)= \dfrac{\displaystyle  (d_2-b)(d_3-b)c+sab(d_2-c)}{ab^2} .
\end{equation}

\noindent where $c=min\lbrace a,d_2 \rbrace$ and $s=0$ if $\beta \neq 0$ and $m$ is odd, or $s=1$, otherwise. By expression (\ref{eq16}) we conclude that

\begin{center}
$k=\dfrac{(d_3-b)\cdot c}{ab }+\dfrac{(2b-c)\cdot s}{b}+1$,
\end{center}

\noindent since in this case we have that $d_2=2b$.\end{proof}

$ \ \ $

When we look to the characteristic exponents of $\gamma$ in Theorem \ref{mult 2}, we identify four situations depending on the values that $c$ and $s$ may assume. The following example shows that these four situations can occur.

\begin{example}\label{example1} \noindent \textit{($c=d_2$ and $s=1$) Consider the map germ}

\begin{center}
$f(x,y)=(x,y^2,x^2y-xy^5)$,
\end{center}  

\noindent \textit{which is quasi-homogeneous of type $(4,2,9; 4,1)$. We have that $D(f)=V(x(x-y^4))$ which is reduced. So, by Theorem} \rm\ref{criterio} \textit{we have that $f$ is finitely determined. Using Proposition} \rm\ref{mult 2} \textit{we obtain that the characteristic exponents of $\gamma$ are $2$ and $5$. Note that in this case the plane $H=V(X)$ contains an irreducible component of $f(D(f))$ (see Figure} \rm\ref{figura2}). 

\begin{figure}[h]
\centering
\includegraphics[scale=0.3]{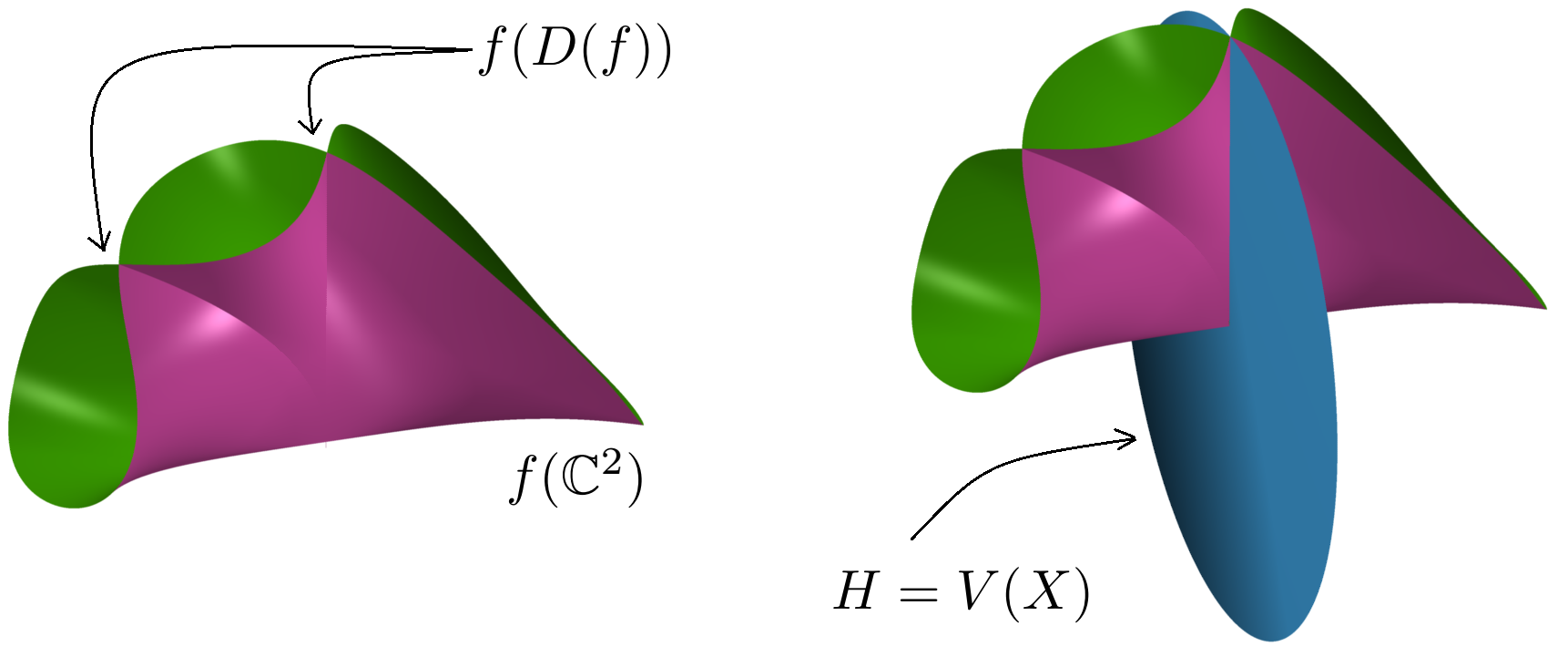} 
\caption{The surface $f(\mathbb{C}^2)$ and the plane $H=V(X)$ (real points)}\label{figura2}
\end{figure}

\textit{We will illustrates the other cases in Table} \rm\ref{tabela1} \textit{(see Section} \rm\ref{sec4}\textit{). For instance, consider the $F_4$-singularity of Mond's list} \rm\cite{mond6}, \textit{which corresponds to the case where $c=a$ and $s=0$. For the case where $c=a$ and $s=1$ consider the cross-cap given by $f(x,y)=(x,y^2,xy)$. Finally, for the case where $c=d_2$ and $s=0$ consider the $B_3$-singularity of of Mond's list.}

\end{example}

\subsection{The case of multiplicity greater than $2$}

$ \ \ \ \ $ In this section we will present a proof of Theorem \ref{mainresult1} in the case where the multiplicity of $f$ is greater than $2$. Note that if $n=m(f(\mathbb{C}^2))\geq 3$ in Lemma \ref{lemma corank 1}, then $\beta\neq 0$. Otherwise, the restriction of $f_{|V(x)}:V(x)\rightarrow \mathbb{C}^3$ of $f$ to the curve $V(x) \subset \mathbb{C}^2$ will be generically $n$-to-$1$ over its image, a contradiction since $f$ is finitely determined. In this case, we will suppose that $\beta=1$. Also, if $f$ is finitely determined, then by \cite[Prop. 1.15]{mondformulas} we have that $D(f)=V(\lambda(x,y))$, where 

\begin{equation}\label{eq2}
\lambda(x,y)=x^{s}y^{v} \cdot  \displaystyle \left( { \prod_{i=1}^{r}}(y^a-\alpha_i x^b) \right)
\end{equation}

\noindent $s,v \in \lbrace 0,1 \rbrace$, $\alpha_i \in \mathbb{C}$, $\alpha_i\neq 0$ for all $i=1,\cdots,r$ and $r=\dfrac{b(n-1)(m-1)-s a - v b}{ab}$. In particular, $\lambda$ is a quasi-homogeneous polynomial of type $(b(n-1)(m-1);a,b)$. To prove the main result of this section we will need the following three lemmas.

\begin{lemma}\label{lemma aux}  Let $f:(\mathbb{C}^2,0)\rightarrow (\mathbb{C}^3,0)$ be a corank $1$, finitely determined, quasi-homogeneous map germ of type $(d_1=a,d_2,d_3;a,b)$, with $d_2\leq d_3$, and multiplicity greater than $2$. Write $f$ as in Lemma \rm\ref{lemma corank 1}, \textit{that is, in the form} 

\begin{center}
$f(x,y)=(x, y^n+xp(x,y), y^m+ xq(x,y))$. 
\end{center}

\noindent \textit{Then:}

\begin{flushleft}
\textit{(a) $V(x)$ is an irreducible component of $D(f)$ if and only if $gcd(n,m)=2$. Furthermore, if $s=1$ in} \rm(\ref{eq2}), \textit{then $V(x)$ is a fold component of $D(f)$.}
\end{flushleft}

\begin{flushleft}
\textit{(b) If $V(x)$ is an irreducible component of $D(f)$, then $b=1$ and $a$ is odd.}
\end{flushleft}

\begin{flushleft}
\textit{(c)  If $v=1$ in} \rm(\ref{eq2}), \textit{then $V(y)$ is an identification component of $D(f)$. Furthermore, if $V(y)$ is an irreducible component of $D(f)$, then $a=1$.}
\end{flushleft}

\noindent \textit{(d) $D(f)=V(\lambda(x,y))$, where}

\begin{equation}\label{eq3}
\lambda(x,y)=x^{s} \cdot  \displaystyle \left( { \prod_{i=1}^{r}}(y^a-\alpha_i x^b) \right)
\end{equation}

\noindent $s \in \lbrace 0,1 \rbrace $, $\alpha_i \in \mathbb{C}$ and $r=\dfrac{b(n-1)(m-1)- sa}{ab}$.

\end{lemma}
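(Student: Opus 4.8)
\textbf{Proof plan for Lemma \ref{lemma aux}.}

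The starting point is the factorization (\ref{eq2}) of the defining equation $\lambda$ of $D(f)$ valid for finitely determined corank $1$ map germs, together with the fact that $\lambda$ is quasi-homogeneous of type $(b(n-1)(m-1);a,b)$. For part (a), I would compute the lifting $D^2(f)$ explicitly: since $f(x,y)=(x,y^n+xp,y^m+xq)$, the equation $x=x'$ forces the divided differences $(y^n-y'^n)/(y-y')+x\cdot(\text{stuff})$ and similarly for the $y^m$ term. Restricting the defining equations of $D^2(f)$ to $x=0$ reduces to asking when $(y^n-y'^n)/(y-y')$ and $(y^m-y'^m)/(y-y')$ have a common zero with $y\ne y'$ (or $y=y'=0$); this common factor analysis over $\mathbb{C}$ is governed exactly by whether $\gcd(n,m)>1$, and for a finitely determined germ from $(\mathbb C^2,0)$ to $(\mathbb C^3,0)$ with isolated instability the only possibility is $\gcd(n,m)=2$. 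So $V(x)\subset D(f)$ iff $\gcd(n,m)=2$; this pins down the exponent $s$ of $x$ in $\lambda$ to be the indicator of $\gcd(n,m)=2$. The ``fold component'' assertion then follows from Definition \ref{typesofcomp}: when $s=1$, on $V(x)$ we have $f(0,y)=(0,y^n,y^m)$ with $\gcd(n,m)=2$, so $f_{|V(x)}$ is generically $2$-to-$1$ (it factors through $y\mapsto y^2$-type behaviour), hence $V(x)$ is a fold component.

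For part (b), suppose $V(x)\subset D(f)$, so by (a) $\gcd(n,m)=2$; in particular $n,m$ are both even. Recall $d_2=nb$ and $d_3=mb$ (from the normal form in Lemma \ref{lemma corank 1}, the $y^n$ and $y^m$ terms have weighted degrees $nb$ and $mb$), and $d_1=a$. Here the hypothesis that the multiplicity is greater than $2$ gives $n\ge 3$, hence $n\ge 4$ since $n$ is even; I would use the constraint that $\lambda$, and more specifically each factor $y^a-\alpha_i x^b$, must be quasi-homogeneous of a consistent type together with finite determinacy (Theorem \ref{criterio}: $\mu(D(f),0)<\infty$) to force $\gcd(a,b)$-type restrictions. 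The cleanest route: the factor $x^s$ contributes degree $sa$, and $r$ factors each of degree $ab$ contribute $rab$, summing to $b(n-1)(m-1)$, which recovers the stated formula $r=(b(n-1)(m-1)-sa)/(ab)$ and forces $ab\mid b(n-1)(m-1)-sa$, i.e. $a\mid b(n-1)(m-1)$; combined with $\gcd(a,b)=1$ (from Definition \ref{defquasihomog}, the weights have no common factor — here we only have two weights $a,b$) one deduces $a\mid (n-1)(m-1)$. Since $n,m$ even, $(n-1)(m-1)$ is odd, so $a$ is odd. To get $b=1$: the component $V(x)$ of $D(f)$ maps under $f$ onto a curve, and matching multiplicities / the structure of the remaining identification components pair with it; alternatively, since $\gcd(n,m)=2$ and $a$ is constrained, a parity/coprimality argument on the weighted degrees $(a,nb,mb)$ having gcd $1$ forces $b=1$. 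I expect this to be the step needing the most care.

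For part (c), the exponent $v$ of $y$ in (\ref{eq2}): $V(y)\subset D(f)$ means $y\mid \lambda$, i.e. there are double points approaching $0$ along $y=0$. On $V(y)$ we have $f(x,0)=(x,0,0)$ (since $p(x,0)=q(x,0)=0$ in the normal form), so the restriction $f_{|V(y)}$ is a parametrization of a line and is generically $1$-to-$1$; hence by Definition \ref{typesofcomp}(a), $V(y)$ is an identification component, with some associated component $D(f)^i$ also mapping onto the $X$-axis. The ``furthermore'' statement that $a=1$ when $V(y)\subset D(f)$: the associated identification component must also be $f$-identified with $V(y)$, and a weighted-degree bookkeeping argument (the image $f(V(y))=V(Y,Z)$ has multiplicity $1$, forcing the other preimage branch to meet it with compatible weights) yields $a=1$; this parallels the $b=1$ argument in (b).

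Finally, part (d) is essentially a restatement: by the hypothesis multiplicity $>2$ we have $n\ge 3$, hence $\beta\ne 0$ (as noted in the text preceding the lemma, otherwise $f_{|V(x)}$ would be generically $n$-to-$1$, contradicting finite determinacy), and moreover $V(y)$ cannot be a component — indeed if $v=1$ then by (c) $a=1$, but I would check this is incompatible with the standing configuration (or simply absorb it: the claim in (d) is that $v=0$). Concretely, when $a=1$ the normal form degenerates in a way that either violates $d_2\le d_3$ with $d_1=a$ being the smallest, or makes $f$ not finitely determined; ruling this out sets $v=0$ in (\ref{eq2}), which immediately turns (\ref{eq2}) into (\ref{eq3}) with the adjusted $r=(b(n-1)(m-1)-sa)/(ab)$. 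The main obstacle throughout is the bookkeeping that converts the coprimality of the weights $(a,b)$ and the finite-determinacy constraint $\mu(D(f),0)<\infty$ into the sharp arithmetic conclusions $b=1$, $a$ odd, and $v=0$; everything else is a direct unwinding of the definitions of $D^2(f)$, $D(f)$, and identification/fold components.
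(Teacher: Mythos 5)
Parts (a) and (c) of your plan are essentially sound and close in spirit to the paper's proof, and your arithmetic route to ``$a$ odd'' in (b) is a legitimate alternative to the paper's argument (the paper instead uses finiteness of $C(f)$ via formula (\ref{eq18}) to force $n-\eta a=1$ or $m-\theta a=1$ and then argues by parity): since $r$ in (\ref{eq2}) is an integer, $sa+vb+rab=b(n-1)(m-1)$, $\gcd(a,b)=1$, and $(n-1)(m-1)$ is odd when $\gcd(n,m)=2$ --- though you must account for the $y^{v}$ factor of (\ref{eq2}) (if $v=1$ then (c) gives $a=1$, which is odd anyway), and you should quote (\ref{eq2}) rather than (\ref{eq3}) there to avoid leaning on (d). The genuine gap is the claim $b=1$ in (b): you leave it open explicitly, and neither route you sketch works as stated --- ``matching multiplicities with the remaining identification components'' is not an argument, and the premise that the weighted degrees $(a,nb,mb)$ are coprime is neither available a priori nor sufficient (coprimality of the degrees does not force $b=1$). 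The paper's proof uses an idea absent from your plan: because $\gcd(a,b)=1$ and $\lambda$ is quasi-homogeneous of type $(b(n-1)(m-1);a,b)$, every monomial of $\lambda$ has the form $x^{kb}y^{(n-1)(m-1)-ka}$; if $V(x)\subset D(f)$ the pure $y$-term vanishes, hence $x^{b}$ divides $\lambda$, and reducedness of $D(f)$ (finite determinacy, Theorem \ref{criterio}) forces $b=1$. Your own weighted-degree bookkeeping would also close the gap in one extra line --- with $s=1$, $a(1+rb)=b\bigl((n-1)(m-1)-v\bigr)$ gives $b\mid a$, hence $b=1$ --- but that step is not in your proposal.

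In (d) you misread the statement: the lemma does not assert $v=0$, and your primary route --- ruling out $v=1$ as ``incompatible'' because $d_{1}=a$ would have to be the smallest degree --- rests on a condition that is not part of Lemma \ref{lemma corank 1} ($a$ need not be minimal among the degrees, e.g.\ the type $(1,4,6;1,1)$ example in the paper), so that argument would fail. The paper's proof of (d) is precisely the absorption you mention only in passing: if $v=1$ then $a=1$ by (c), so the factor $y=y^{a}-0\cdot x^{b}$ can be incorporated into the product by allowing one $\alpha_{i}$ to vanish, which is exactly (\ref{eq3}) with $r=\bigl(b(n-1)(m-1)-sa\bigr)/(ab)$; no incompatibility needs to be (or can be) proved.
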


\begin{proof} (a) If $gcd(n,m)=2$, then the restriction of $f$ to $V(x)$ is generically $2$-to-$1$ and therefore $V(x)$ is a fold component of $D(f)$. On the other hand, suppose that $V(x) \subset D(f)$ and $gcd(n,m) \neq 2$. Since $f$ is finitely determined, then we should have that $gcd(n,m)=1$. Therefore, $V(x)$ should be an identification component of $D(f)$. Thus, there exist another irreducible component of $D(f)$ that has the same image by $f$ as $V(x)$. By expression (\ref{eq2}), this irreducible component should be either $V(y)$ or $V(y^a-\alpha_i x^b)$ for some $i$. However, note that $f(V(y)) \neq f(V(x))$ and $f(V(y^a-\alpha_i x^b)) \neq f(V(x))$ for all $i$, which is a contradiction.

$ \ \ $

\noindent (b) By expression (\ref{eq2}), we have that $D(f)=V(\lambda(x,y))$, where $\lambda$ is  quasi-homogeneous of type $(b(n-1)(m-1);a,b)$. Thus we can write

\begin{center}
$\lambda(x,y)=a_0y^{(n-1)(m-1)}+a_1x^by^{(n-1)(m-1)-a}+a_2x^{2b}y^{(n-1)(m-1)-2a}+\cdots+a_kx^{\zeta b}y^{(n-1)(m-1)- \zeta a}$,
\end{center}

\noindent where $\zeta$ is the greatest positive integer such that $(n-1)(m-1)- \zeta a\geq 0$.

Since $V(x) \subset D(f)$, then $a_0=0$. Since $f$ is finitely determined, then by Theorem \ref{criterio} we have that $D(f)$ is reduced, and therefore $b=1$ and $a_1 \neq 0$. To see that $a$ should be odd, write $f$ in the form:

\begin{center}
%$f(x,y)=(x,y^n+xp(x,y),y^n+xq(x,y))=$

%$ \ \ $

$f(x,y)=(x,y^n+b_1x^by^{n-a}+b_2x^{2b}y^{n-2a}+\cdots+b_{\eta}x^{\eta b}y^{n- \eta a}, \ y^m+c_1x^by^{m-a}+c_2x^{2b}y^{m-2a}+\cdots+c_{\theta}x^{\theta b}y^{m- \theta a})$, 
\end{center}

\noindent where $\eta$ (respectively $\theta$) is the greatest positive integer such that $n-\eta a\geq 1$ (respectively $m- \theta a\geq 1$). By formula (\ref{eq18}) in Remark \ref{remarktriplepoints}, we obtain that $C(f)$ is the codimension of the ideal $ I = \displaystyle \biggl< n \cdot y^{n-1}+ x\cdot\dfrac{\partial p}{\partial y}, \ m \cdot y^{m-1}+ x\cdot\dfrac{\partial q}{\partial y} \biggr> $ in $\mathcal{O}_2$. We conclude that either $b_{\eta} \neq 0$ and $n-\eta a=1$ or $c_{\theta}\neq 0$ and $m-\theta a=1$. Otherwise, the codimension of $I$ in $\mathcal{O}_2$ is not finite and hence $C(f)$ is also not finite, which is a contradiction since $f$ is finitely determined. Now, we have that $n,m$ are both even. Since either $n-\eta a=1$ or $m- \theta a=1$, by parity of $n,m$ and $a$ we conclude that $a$ should be odd.

$ \ \ $

\noindent (c) Note that the restriction of $f$ to $V(y)$ is generically $1$-to-$1$. Thus, if $V(y) \subset D(f)$, then it is an identification component of $D(f)$. In this case, there exist another irreducible component of $D(f)$, which is necessarily on the form $V(y^a-\alpha_i x^b)$, for some $i$ with $\alpha_i \neq 0$, such that $f(V(y))=f(V(y^a-\alpha_ix^b))$. Set $\mathscr{C}_{\alpha_i}:=V(y^a-\alpha_i x^b)$ and consider a parametrization $\varphi_{\alpha_i}: W \rightarrow U $ of  $\mathscr{C}_{\alpha_i}$ defined by $\varphi_{\alpha_i}(u)=(u^a,\rho_i u^b)$, where $W$ is an open neighbourhood of $0$ in $\mathbb{C}$ and $\rho_i \in \mathbb{C}$ is such that $\rho_i^a =\alpha_i$. Since $\mathscr{C}_{\alpha_i}$ is an identification component of $D(f)$, then the mapping $\tilde{\varphi}_{\alpha_i}:= f\circ \varphi_{\alpha_i}: W \rightarrow V$, defined by 

\begin{equation}\label{eq17}
\tilde{\varphi}_{\alpha_i}:=(u^a,\rho_{1,i} u^{d_2}, \rho_{2,i} u^{d_3}),
\end{equation}

\noindent is a parametrization of $f(\mathscr{C}_{\alpha_i})$, for some $\rho_{1,i}, \rho_{2,i} \in \mathbb{C}$. Since $f(V(y))=f(V(y^a-\alpha_ix^b))$, then $\rho_{1,i}=\rho_{2,i}=0$. Since the restriction of $f$ to $\mathscr{C}_{\alpha_i}$ is generically $1$-to-$1$, we conclude that $a=1$.

$ \ \ $

\noindent (d) Suppose that $v=1$ in (\ref{eq2}). By (c), we have that $a=1$. Thus we can rewrite (\ref{eq2}) allowing one of the $\alpha_i's$ to be zero, when $a=1$.\end{proof}

\begin{lemma}\label{lemma aux2} Let $f:(\mathbb{C}^2,0)\rightarrow (\mathbb{C}^3,0)$ be a corank $1$, finitely determined, quasi-homogeneous map germ of type $(d_1=a,d_2,d_3;a,b)$, with $d_2\leq d_3$, and multiplicity greater than $2$. Let $\theta$ be the largest positive integer such that $m-\theta a \geq 1$. If $a>bn$, then $\theta=(m-1)/a$ and $f$ can be written in the following form

\begin{center}
$f(x,y)=(x,y^n,y^m+c_1x^by^{m-a}+c_2x^{2b}y^{m-2a}+\cdots+c_{\theta}x^{\theta b}y)$,
\end{center}

\noindent where $c_1,\cdots,c_{\theta}\in \mathbb{C}$ and $c_{\theta}\neq 0$.

\end{lemma}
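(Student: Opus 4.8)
The plan is to combine three ingredients: the constraints imposed by quasi-homogeneity with weights $(a,b)$, the hypothesis $a>bn$, and the finite determinacy of $f$ through the formula $C(f)=\dim_{\mathbb C}\mathcal O_2/J(f)$ of Remark~\ref{remarktriplepoints}. Since the multiplicity $n$ is greater than $2$, we already know (as in the discussion preceding Lemma~\ref{lemma aux}) that we may take $\beta=1$, so $f(x,y)=(x,\,y^n+xp(x,y),\,y^m+xq(x,y))$; comparing weighted degrees of the leading monomials $y^n$ and $y^m$ shows $d_2=nb$ and $d_3=mb$.

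First I would show that $p\equiv 0$. Any monomial $x^iy^j$ of $xp(x,y)$ has $i\ge 1$ and $j\ge 1$ (the latter because $p(x,0)=0$), and quasi-homogeneity forces $ia+jb=d_2=nb$; hence $a\le ia=(n-j)b\le(n-1)b<nb$, contradicting the hypothesis $a>bn$. So the second coordinate of $f$ is exactly $y^n$. Next, for the third coordinate, every monomial $x^iy^j$ of $xq(x,y)$ satisfies $i\ge1$, $j\ge1$ and $ia+jb=d_3=mb$; since $\gcd(a,b)=1$ (Definition~\ref{defquasihomog}) this forces $b\mid i$ and $a\mid(m-j)$, so writing $i=kb$ we get $j=m-ka$ with $k\ge1$ and $m-ka\ge1$. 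Therefore $xq(x,y)=\sum_{k=1}^{\theta}c_kx^{kb}y^{m-ka}$ for suitable $c_k\in\mathbb C$, where $\theta$ is, by its very definition, the largest positive integer with $m-\theta a\ge1$. This is already the asserted normal form, except for the two claims $\theta=(m-1)/a$ and $c_\theta\neq 0$.

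The crux is to extract these last two facts from finite determinacy. With $p\equiv0$, the ramification ideal is $J(f)=\langle P_y,Q_y\rangle=\langle n\,y^{n-1},\,m\,y^{m-1}+x\,q_y\rangle=\langle y^{n-1},\,m\,y^{m-1}+x\,q_y\rangle$, where $x\,q_y=\sum_{k=1}^{\theta}c_k(m-ka)x^{kb}y^{m-ka-1}$. By~(\ref{eq18}) this ideal has finite codimension in $\mathcal O_2$, so $V(J(f))=\{0\}$. Restricting the second generator to the line $V(y)$: the term $m\,y^{m-1}$ dies since $m\ge2$, and among the terms of $x\,q_y$ only one with $m-ka-1=0$ can survive; such an index exists if and only if $a\mid(m-1)$, in which case it equals $\theta$ and the surviving term is $c_\theta x^{\theta b}$. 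If $a\nmid(m-1)$, or if $a\mid(m-1)$ but $c_\theta=0$, then $J(f)$ vanishes identically on $V(y)$, so $V(J(f))\supseteq V(y)$ and $C(f)=\infty$, contradicting finite determinacy. Hence $a\mid(m-1)$, that is $\theta=(m-1)/a$ and $m-\theta a=1$ (so the last monomial is $c_\theta x^{\theta b}y$), and $c_\theta\neq0$, which is exactly the claimed form.

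I expect the only delicate point to be the bookkeeping in the last paragraph --- identifying precisely which monomial of $x\,q_y$ becomes a nonzero pure power of $x$ upon setting $y=0$ --- while the rest is elementary manipulation of weighted degrees. An entirely analogous use of~(\ref{eq18}) already appears in the proof of Lemma~\ref{lemma aux}(b), so this step should be routine.
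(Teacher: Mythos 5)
Your proof is correct and follows essentially the same route as the paper: the paper's own proof is a one-line appeal to the argument of Lemma \ref{lemma aux}(b), namely that the quasi-homogeneous weighted-degree bookkeeping (with $a>bn$) forces $p\equiv 0$ and the monomial form of $xq$, and the finiteness of $C(f)=\dim_{\mathbb{C}}\mathcal{O}_2/J(f)$ forces $m-\theta a=1$ and $c_{\theta}\neq 0$. You have simply written out that same argument in full detail, including the restriction of $J(f)$ to $V(y)$, so no further changes are needed.
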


\begin{proof} The proof is similar to the proof of Lemma \ref{lemma aux}(b) observing that if $m-\theta a>1$ or $c_{\theta}=0$ then $C(f)$ is not finite, a contradiction since $f$ is finitely determined.\end{proof}

$ \ \ $

The following lemma shows how the Milnor number of an irreducible plane curve with only three characteristic exponents can be calculated.

\begin{lemma} \label{lemma aux3} Let $(X,0)$ be a germ of irreducible reduced plane curve and suppose that it has only three characteristic exponents, denoted by $e_0,e_1$ and $e_2$. Set $e:=gcd(e_0,e_1)$, the greatest common divisor of $e_0$ and $e_1$. Then

\begin{center}
$\mu(X,0)=(e_2-e_1)\cdot e+(e_1-1)\cdot e_0- e_2+1$.
\end{center}  
\end{lemma}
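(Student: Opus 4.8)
The plan is to derive the formula from the classical description of the semigroup (or the $\beta$-invariants / characteristic pairs) of an irreducible plane curve with three characteristic exponents, combined with the Milnor number formula $\mu = 2\delta$ and the Milnor--Jung / conductor formula expressing $\delta$ in terms of the semigroup. First I would recall the standard setup: if $(X,0)$ is irreducible with characteristic exponents $(e_0;e_1,e_2)$ (using $e_0 = \mathrm{mult}(X,0)$ as the first), then a Puiseux parametrization has the form $t\mapsto (t^{e_0}, t^{e_1} + \cdots + t^{e_2}+\cdots)$, and the key numerical datum is the tower of gcd's: $\gcd(e_0) = e_0$, then $e := \gcd(e_0,e_1)$, then $\gcd(e_0,e_1,e_2)=1$ since the curve is reduced and the exponents generate. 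The two ``jumps'' in the gcd tower are $e_0 \to e$ at the first characteristic exponent and $e \to 1$ at the second.

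Next I would invoke the formula for $\delta$ (equivalently $2\delta = \mu$, valid for plane curves) in terms of these data. For a curve with characteristic pairs the conductor $c = \mu$ of the local ring satisfies a well-known closed expression; in the two-exponent case $(e_0;e_1)$ with $e=\gcd(e_0,e_1)=1$ one has $\mu = (e_0-1)(e_1-1)$, and in the three-exponent case the standard recursion (see e.g. Zariski's ``Studies in equisingularity'' or the semigroup-of-values description) gives
\[
\mu(X,0) = (e_2 - e_1)\, e + (e_1 - 1)\, e_0 - e_2 + 1,
\]
which is exactly the claimed formula. Concretely I would build the semigroup generators $\bar{\beta}_0 = e_0$, $\bar{\beta}_1 = e_1$, $\bar{\beta}_2 = \tfrac{e_1(e_0-e)}{e} + e_2$ (or the analogous Apéry/Zariski normalization), compute the conductor of the numerical semigroup they generate using the formula $c = \sum_{i\ge 1}(n_i - 1)\bar{\beta}_i - e_0 + 1$ where $n_i = \gcd(\bar\beta_0,\dots,\bar\beta_{i-1})/\gcd(\bar\beta_0,\dots,\bar\beta_i)$, so here $n_1 = e_0/e$ and $n_2 = e$, and then simplify; $\mu = c$ for an irreducible plane curve.

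Alternatively, and perhaps cleaner for a self-contained write-up, I would argue by the multiplicity-sequence / blow-up recursion: writing $(X,0)$ as the strict transform behaviour is governed by the Enriques diagram, and $\mu = \sum m_p(m_p - 1)$ over infinitely near points $p$, which for three characteristic exponents splits into the free point at multiplicity $e_0$ repeated according to $\lfloor e_1/e_0\rfloor$ etc.; but this is bookkeeping-heavy. The main obstacle in either approach is purely the algebraic simplification: one must verify that the conductor of the semigroup generated by the three Apéry generators collapses to the stated linear expression in $e_0, e_1, e_2, e$. This is routine but must be done carefully, paying attention to the normalization convention for the third generator $\bar\beta_2$ (it is $e_2$ plus a correction term coming from the earlier jumps, not $e_2$ itself), since getting that wrong changes the final formula. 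I expect no conceptual difficulty beyond keeping the characteristic-exponent versus characteristic-pair conventions straight and confirming the base cases (e.g. that it reduces correctly to $(e_0-1)(e_1-1)$ when artificially $e_2 = e_1$ and $e=1$).
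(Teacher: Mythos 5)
Your proposal is correct and follows essentially the same route as the paper: express the semigroup generators $\bar\beta_0=e_0$, $\bar\beta_1=e_1$, $\bar\beta_2=\tfrac{e_1(e_0-e)}{e}+e_2$ in terms of the characteristic exponents (the paper cites Campillo, Prop.\ 4.3.5), use that the conductor equals $\mu$ for an irreducible plane curve, and evaluate the conductor formula $c=\sum_{i\geq 1}(n_i-1)\bar\beta_i-e_0+1$ with $n_1=e_0/e$, $n_2=e$ (the paper's Prop.\ 4.4.5(iii)), which indeed simplifies to $e_1(e_0-e)+(e-1)e_2-e_0+1=(e_2-e_1)e+(e_1-1)e_0-e_2+1$. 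No gap.
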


\begin{proof} By \cite[Prop. 4.3.5]{campillo} one can express the generators of the semigroup $\Gamma$ of $(X,0)$ in terms of the characteristic exponents $e_0$, $e_1$ and $e_2$. Since $(X,0)$ is a plane curve, the conductor of the semigroup $\Gamma$ coincides with the Milnor number of $(X,0)$. Now, the proof follows by \cite[Prop. 4.4.5(iii)]{campillo}.\end{proof}

$ \ \ $

Now we are able to give a positive answer for Question $1$ in the case where the multiplicity of $f$ is greater than two. We do this in the following proposition.

\begin{proposition}\label{mult greater 2} Let $f:(\mathbb{C}^2,0)\rightarrow (\mathbb{C}^3,0)$ be a corank $1$, finitely determined, quasi-homogeneous map germ of type $(d_1=a,d_2,d_3;a,b)$, with $d_2\leq d_3$, and multiplicity greater than $2$. Write $f$ as in Lemma \rm\ref{lemma corank 1}, \textit{that is, in the form} 

\begin{center}
$f(x,y)=(x, y^n+xp(x,y), y^m+ xq(x,y))$. 
\end{center}

\noindent \textit{Set $c:=min \lbrace a,d_2 \rbrace$ and let $\gamma$ be the transversal slice of $f$. Then the characteristic exponents of $\gamma$ are given in terms of $a,b, d_2=bn,$ and $d_3=bm$ as follows in Table} \rm\ref{tabela2}. 

\begin{table}[!h]
\caption{Characteristic exponents of the transversal slice of $f$.}\label{tabela2}
\centering
{\def\arraystretch{2.8}\tabcolsep=22pt 

\begin{tabular}{ c || c | c ||  c }

\hline 
\rowcolor{lightgray}
 \textbf{Cases} & \multicolumn{2}{c||}{\textbf{Conditions}} & \textbf{Characteristic Exponents of $\gamma$}  \\
			  
\hline

$Case$ $A$  & \multirow{2}{*}{$a\leq d_2$} & $gcd(n,m)=1$ & $\dfrac{d_2}{b}$, $ \ \dfrac{d_3}{b}$ \\ \cline{1-1} \cline{3-3} \cline{4-4}
     
$Case$ $B$     &  & $gcd(n,m)=2$  & $d_2$, $ \ d_3, \ d_2+d_3-a$ \\
\hline

$Case$ $C$ & \multicolumn{2}{c||}{$a>d_2$} & $\dfrac{d_2}{b}$, $ \ \dfrac{(d_3-b)d_2}{ab}+1$   \\

\hline
\end{tabular}
}
\end{table}

\end{proposition}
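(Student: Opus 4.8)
The plan is to exploit Lemma \ref{milnorgamma} together with Lemma \ref{lemma aux3} in a systematic case analysis, using the structure of $D(f)$ from Lemma \ref{lemma aux}(d). The guiding principle is that $\gamma$ is an irreducible plane curve of multiplicity $n = d_2/b$ (since $m(f(\mathbb{C}^2))=n$ and $\gamma$ is a generic slice), so its first characteristic exponent is automatically $e_0 = d_2/b$. The remaining exponents must be pinned down from the numerical data, and the Milnor number $\mu(\gamma,0)$ computed in Lemma \ref{milnorgamma} will be the constraint that forces them. I would first record that since $f$ has multiplicity $n\geq 3$ we have $\beta\neq 0$, hence $s=1$ in \eqref{eq2} exactly when $\gcd(n,m)=2$ (this is Lemma \ref{lemma aux}(a)), and $s=0$ otherwise; I would also observe that the value of $s$ appearing in Lemma \ref{milnorgamma} (restriction of $f$ to $x=0$ generically $1$-to-$1$ or not) coincides with the $s$ in \eqref{eq2}.

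For Case $C$ ($a>d_2=bn$, so in particular $a>bn\geq b$, forcing $s=0$ since $\gcd(n,m)$ cannot be $2$ when $a>bn$... — here I would use Lemma \ref{lemma aux2} to get the normal form $f(x,y)=(x,y^n,y^m+\sum c_i x^{ib}y^{m-ia})$ with $c_\theta\neq 0$), the claim is that $\gamma$ has exactly two characteristic exponents $d_2/b$ and $(d_3-b)d_2/(ab)+1$. Here $c=\min\{a,d_2\}=d_2$, so Lemma \ref{milnorgamma} gives $\mu(\gamma,0)=(d_2-b)(d_3-b)d_2/(ab^2)$. I would argue that $\gamma$ must have exactly two characteristic exponents $e_0=d_2/b=n$ and some $e_1$: since the slice of a map germ of the form $(x,y^n,\cdots)$ restricted to a generic plane is parametrized by $u\mapsto(u^n,\cdots)$ with $\gcd$ of the exponents appearing being controlled, and then solve $\mu(\gamma,0)=(e_1-1)(n-1)-e_1+n$ (the standard two-exponent Milnor formula, or Lemma \ref{lemma aux3} with the third exponent degenerating) for $e_1$, checking it yields $e_1=(d_3-b)d_2/(ab)+1$. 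The bulk of the work is confirming $\gamma$ genuinely has only two exponents, which follows because the single monomial $c_\theta x^{\theta b}y$ dominates the slice parametrization.

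For Case $A$ ($a\leq d_2$, $\gcd(n,m)=1$), here $s=0$ and $c=\min\{a,d_2\}=a$, so Lemma \ref{milnorgamma} gives $\mu(\gamma,0)=(d_2-b)(d_3-b)/(b^2\cdot b)\cdot... $ — rather $\mu(\gamma,0)=(d_2-b)(d_3-b)/b^2$ after substituting $c=a$ and $s=0$. Wait: $(d_2-b)(d_3-b)c/(ab^2)$ with $c=a$ gives $(d_2-b)(d_3-b)/b^2 = (n-1)(m-1)$. Since $\gcd(n,m)=1$, the slice curve $u\mapsto(u^n,\cdots,u^m,\cdots)$ has exactly two characteristic exponents $n=d_2/b$ and $m=d_3/b$, and indeed $(n-1)(m-1)$ is precisely the Milnor number of the curve with characteristic exponents $(n,m)$ — so this case is consistent and essentially immediate from the parametrization. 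For Case $B$ ($a\leq d_2$, $\gcd(n,m)=2$), now $s=1$, $c=a$, and $\mu(\gamma,0)=((d_2-b)(d_3-b)a+ab(d_2-a))/(ab^2)=(n-1)(m-1)+(d_2-a)/b$. I would set the three claimed exponents $e_0=d_2$, $e_1=d_3$, $e_2=d_2+d_3-a$, compute $e=\gcd(d_2,d_3)=\gcd(bn,bm)=b\gcd(n,m)=2b$, plug into Lemma \ref{lemma aux3} to get $\mu=(e_2-e_1)e+(e_1-1)e_0-e_2+1=(d_2-a)(2b)+(d_3-1)d_2-d_2-d_3+a+1$, and verify this equals the value from Lemma \ref{milnorgamma}. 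The nontrivial part is \emph{producing} these three exponents rather than just checking the Milnor number: I would analyze the parametrization of $\gamma$ coming from the fold component $V(x)$ of $D(f)$ (using Lemma \ref{lemma aux}(a),(b), which gives $b=1$, $a$ odd in this subcase) — the extra characteristic exponent $d_2+d_3-a$ arises because the generic plane section, being tangent in a controlled way to $f(D(f))$, picks up a higher-order term governed by $a$.

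The main obstacle I anticipate is Case $B$: showing that $\gamma$ has \emph{exactly} three characteristic exponents and that the third one is $d_2+d_3-a$, rather than merely matching the Milnor number (which only gives one equation and so cannot by itself distinguish between, say, three exponents versus two). This requires an explicit enough understanding of the slice parametrization — presumably by parametrizing $f(\mathbb{C}^2)$ near a generic point of the fold component and intersecting with a generic hyperplane, tracking which monomials survive. I would expect to need the finite-determinacy constraints on the coefficients (as extracted via the ramification ideal in the proof of Lemma \ref{lemma aux}(b)) to guarantee the relevant coefficient is nonzero, so that the exponent $d_2+d_3-a$ actually appears and is not skipped.
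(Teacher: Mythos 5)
Your plan correctly isolates the right ingredients (Lemma \ref{milnorgamma}, Lemma \ref{lemma aux3}, the structure of $D(f)$ from Lemma \ref{lemma aux}), and your Case A is essentially the paper's argument, but there is a genuine gap in Cases B and C: you never supply a mechanism for obtaining a Puiseux parametrization of the slice by an \emph{actually generic} plane, and in these two cases the coordinate plane $V(X)$ is not generic. In Case C condition (3) of Definition \ref{defslice} fails (for $a>d_2$ every component of $f(D(f))$ is tangent to the direction $(0,1,0)$), and the paper repairs this by the linear change $(X,Y,Z)\mapsto (X-Y,Y,Z)$, after which $V(X)$ is generic and the slice is parametrized by $u\mapsto (u^n,\,u^m+c_1u^{nb+m-a}+\cdots+c_\theta u^{\theta nb+1})$ with $c_\theta\neq 0$ by Lemma \ref{lemma aux2} and strictly decreasing exponents; note that on the naive slice $x=0$ your ``dominating monomial'' $c_\theta x^{\theta b}y$ vanishes identically, so without changing the plane one gets the wrong exponents (this is exactly the $H_2$ phenomenon from the introduction, which is a Case C germ), and the Milnor number cannot rescue this by itself. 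In Case B, which you acknowledge as the obstacle, your proposed fix (parametrize near the fold component and use finite determinacy to force the relevant coefficient to be nonzero) would not go through as stated: the finite-determinacy constraint extracted in the proof of Lemma \ref{lemma aux}(b) controls the \emph{last} coefficients $b_\eta$, $c_\theta$, not the coefficients $b_1$, $c_1$ that govern whether the exponent $n+m-a$ actually appears. The paper instead deforms $f$ by the unfolding $f_t=(x-ty^n,\dots)$, proves this unfolding is Whitney equisingular (Damon's topological triviality, constancy of $\mu(D(f_t),0)$ and of the multiplicity of $f_t(D(f_t))$, then Gaffney's criterion), shows that $V(X)$ \emph{is} generic for $f_t$ with $t\neq 0$, reads off from the resulting Puiseux parametrization that $\gamma_t$ has exactly three characteristic exponents $n$, $m$, $k$, pins down $k=n+m-a$ by combining Lemma \ref{milnorgamma} with Lemma \ref{lemma aux3} (so ``$b_1\neq 0$ or $c_1\neq 0$'' is a consequence, not an input), and finally transfers the conclusion to $t=0$ via constancy of $\mu(\gamma_t,0)$ and the Buchweitz--Greuel theorem. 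This deformation-and-transfer step is the key missing idea in your proposal.

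A second, smaller point: your fallback of solving the Milnor-number equation only works once the number of characteristic exponents and the initial ones are already known from an honest generic-plane parametrization, as you yourself observe; for instance a curve of multiplicity $4$ with $\mu=18$ can have exponents $(4,7)$ or $(4,6,9)$. So in each of Cases B and C the burden is precisely the genericity and parametrization step left open above, not the numerical bookkeeping; in Case A that step is what the paper's opening tangent-cone analysis of $f(\mathscr{C}_{\alpha_i})$ and $f(\mathscr{C})$ provides, and you should include it (briefly) rather than treat the case as immediate.
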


\begin{proof} Take a representative $f:U\rightarrow V$ of $f$. By Lemma \ref{lemma aux} (d), we have that $D(f)=V(\lambda(x,y))$, where

\begin{center}
$\lambda(x,y)=\displaystyle { x^{s}\prod_{i=1}^{r}}(y^a-\alpha_i x^b)$,
\end{center}

\noindent $s \in \lbrace 0,1 \rbrace$, $\alpha_i \in \mathbb{C}$ are all distinct and $r=\dfrac{(d_2-b)(d_3-b)-sab}{ab^2}$. 

Set $\mathscr{C}_{\alpha_i}:=V(y^a-\alpha_i x^b)$. As in the proof of Lemma \ref{lemma aux}(c), consider a parametrization $\varphi_{\alpha_i}: W \rightarrow U $ of  $\mathscr{C}_{\alpha_i}$ defined by $\varphi_{\alpha_i}(u)=(u^a,\rho_i u^b)$, where $W$ is an open neighbourhood of $0$ in $\mathbb{C}$ and $\rho_i \in \mathbb{C}$ is such that $\rho_i^a =\alpha_i$. So, if $\mathscr{C}_{\alpha_i}$ is an identification component of $D(f)$, then the mapping $\tilde{\varphi}_{\alpha_i}:= f\circ \varphi_{\alpha_i}: W \rightarrow V$, defined by 

\begin{equation}\label{eq19}
\tilde{\varphi}_{\alpha_i}:=(u^a,\rho_{1,i} u^{d_2}, \rho_{2,i} u^{d_3}),
\end{equation}

\noindent is a parametrization of $f(\mathscr{C}_{\alpha_i})$, for some $\rho_{1,i}, \rho_{2,i} \in \mathbb{C}$. On the other hand, if $\mathscr{C}_{\alpha_i}$ is a fold component of $D(f)$, then the mapping $\varphi_{\alpha_i}': W \rightarrow V$, defined by 

\begin{equation}\label{eq20}
\varphi'_{\alpha_i}(u):=(u^{a/2},\rho_{1,i}' u^{{d_2}/2}, \rho_{2,i}' u^{{d_3}/2}),
\end{equation}

\noindent is a parametrization of $f(\mathscr{C}_{\alpha_i})$, for some $\rho_{1,i}', \rho_{2,i}' \in \mathbb{C}$. Set $c:=min\lbrace a, d_2 \rbrace$. Note that if $a>d_2$, then $\rho_{1,i},\rho_{1,i}'\neq 0$, by Lemma \ref{lemma aux2}. In this way, by expressions (\ref{eq19}) and (\ref{eq20}) we can see that the tangent cone of $f(\mathscr{C}_{\alpha_i})$ is the line with direction given by the vector $(1,0,0)$ if $a<d_2$, $(1,\rho_{1,i},0)$ or $(1,\rho_{1,i}^{'},0)$ if $a=d_2$ and finally $(0,1,0)$ if $a>d_2$.

Set $\mathscr{C}:=V(x)$. If $\mathscr{C} \subset D(f)$, then by Lemma \ref{lemma aux} (a) we have that it is a fold component of $D(f)$. In this case, the map $\varphi :W\rightarrow V$ defined by

\begin{equation}\label{eq4}
\varphi(u)=(0,u^{n/2}, u^{m/2})
\end{equation}

\noindent is a parametrization of $f(\mathscr{C})$, and hence its tangent is the line with direction given by the vector $(0,1,0)$.

$ \ \ $

We are now able to handle all cases. Let's show the simplest cases A and C first.

$ \ \ $

\noindent $\bullet$ \textbf{Case A:} By the considerations above, one can see that in this case the plane $H=V(X)$ satisfies the conditions (1),(2) and (3) of Definition \ref{defslice}. Hence, the restriction of $f$ to the line $x=0$, i.e, the map germ $\varphi(u)=f(0,u)=(0,u^{n},u^{m})$, is a Puiseux parametrization for $\gamma$. Therefore, its characteristic exponents are $n=\dfrac{d_2}{b}$ and $m=\dfrac{d_3}{b}$.

$ \ \ $

\noindent $\bullet$ \textbf{Case C:} By Lemma \ref{lemma aux2} we have that $p(x,y)=0$, and therefore we can write $f$ in the form 

\begin{center}
$f(x,y)=(x,y^n,y^m+c_1x^by^{m-a}+c_2x^{2b}y^{m-2a}+\cdots+c_{\theta}x^{\theta b}y)$,
\end{center}

\noindent where $c_{\theta} \neq 0$ and $\theta=(m-1)/a$. By expressions (\ref{eq19}) and (\ref{eq20}) we can see that the plane $H=V(X)$ is not generic for $f$ since the condition (3) of Definition \ref{defslice} fails, that is, $H$ contains the tangent of $f(\mathscr{C}_{\alpha_i})$ for all $i$. Furthermore, condition (2) also fails if $gcd(n,m)=2$. However, after the linear change of coordinates on the target given by $(X,Y,Z)\mapsto (X-Y,Y,Z)$, we can rewrite $f$ as: 

\begin{center}
$f(x,y)=(x-y^n,y^n,y^m+c_1x^by^{m-a}+c_2x^{2b}y^{m-2a}+\cdots+c_{\theta}x^{\theta b}y)$.
\end{center}

Now, note that the plane $H=V(X)$ (in the new system of coordinates) satisfies the conditions (1),(2) and (3) of Definition \ref{defslice}. Hence, the map germ $\varphi(u)=f(u^n,u)=(0,u^{n},u^{m}+c_1u^{nb+m-a}+c_2u^{2nb+m-2a}+\cdots+c_{\theta}u^{\theta nb+1})$ is a Puiseux parametrization for $\gamma$. Now note that

\begin{center}
$m>nb+m-a>2nb+m-2a> \cdots > \theta nb +1$.
\end{center}

\noindent Note also that $gcd(n,\theta nb+1)=1$, therefore the characteristic exponents of $\gamma$ are $n=\dfrac{d_2}{b}$ and $\theta nb+1= \dfrac{(d_3-b)d_2}{ab}+1$.

$ \ \ $

Finally, let's show the most complicated case.

$ \ \ $

\noindent $\bullet$ \textbf{Case B:} By Lemma \ref{lemma aux}(a) and (b) we obtain that $b=1$, hence we can write $f$ as

\begin{center}
$f(x,y)=(x,y^n+b_1xy^{n-a}+b_2x^{2}y^{n-2a}+\cdots+b_{\eta}x^{\eta}y^{n- \eta a}, \ y^m+c_1xy^{m-a}+c_2x^{2}y^{m-2a}+\cdots+c_{\theta}x^{\theta }y^{m-\theta a})$, 
\end{center}

\noindent where $\eta$ (respectively $\theta$) is the greatest positive integer such that $n- \eta a\geq 1$ (respectively $m- \theta a\geq 1$).

Note that in this case the plane $H=V(X)$ fails to be generic for $f$ only because the condition (2) fails, since $H$ contains the image of $V(x)$, which is an irreducible component of $D(f)$ (see Lemma \ref{lemma aux}(a) and expression (\ref{eq4})).

Consider the one-parameter unfolding $F:(\mathbb{C}^2 \times \mathbb{C},0)\rightarrow (\mathbb{C}^3 \times \mathbb{C},0)$, $F=(f_t(x,y),t)$ where $f_t:(\mathbb{C}^2,0)\rightarrow (\mathbb{C}^3,0)$ is defined as

\begin{center}
$f_t(x,y)=(x-ty^n,y^n+b_1xy^{n-a}+b_2x^{2}y^{n-2a}+\cdots+b_{\eta}x^{\eta}y^{n- \eta a}, \ y^m+c_1xy^{m-a}+c_2x^{2}y^{m-2a}+\cdots+c_{\theta}x^{\theta }y^{m-\theta a})$. 
\end{center}

\textbf{Statement:} We claim that $F$ is Whitney equisingular (see Definition \ref{defWhiteq}).

$ \ \ $

\textit{Proof of the Statement.} We have that $F$ is upper, that is, $F$ adds in the $i$-coordinate function of $f$ only terms of weighted degree greater than or equal to $d_i$. Hence, $F$ is topologically trivial by \cite[Th. 1]{damon92}. By  \cite[Cor. 40]{ref2} (see also \cite[Th. 6.2]{ref1}) we have that $\mu(D(f_t),0)$ is constant.

Let $\mathscr{C}_{\alpha_i,t}$ (respectively $\mathscr{C}_{t}$) be the deformation of $\mathscr{C}_{\alpha_i}$ (respectively, $\mathscr{C}$) induced by $F$. Note that the restriction of $f_t$ to $V(x)$ is generically $2$-to-$1$ for any $t$. This means that $\mathscr{C}_{t}=V(x)$ for any $t$, that is, $F$ induces a trivial deformation of $\mathscr{C}$, hence $m(f_t(\mathscr{C}_t))=n/2$ for any $t$. By expressions (\ref{eq19}) and (\ref{eq20}) we can see that either $m(f(\mathscr{C}_{\alpha_i}))=a$ (if $\mathscr{C}_{\alpha_i}$ is an identification component of $D(f)$) or $m(f(\mathscr{C}_{\alpha_i}))=a/2$ (if $\mathscr{C}_{\alpha_i}$ is a fold component of $D(f)$). Since $F$ is upper we see that the multiplicity of $f_t(\mathscr{C}_{\alpha_i,t})$ should be constant since $F$ adds only terms with degree greater than or equal to the degrees of the coordinate functions of $\tilde{\varphi}_{\alpha_i}$ (and also $\varphi^{'}_{\alpha_i}(u)$). Therefore, the multiplicity of $f_t(D(f_t))$ is constant. By \cite[Prop. 8.6 and Cor. 8.9]{gaffney} we conclude the $F$ is Whitney equisingular, which proves the statement.

$ \ \ $

Note that $H\cap f_t(\mathscr{C})=H \cap C(f_t(\mathscr{C}))=0$ for any $t \neq 0$, where $H=V(X)$ as above. Now, note that  $H\cap f_t(\mathscr{C}_{\alpha_{i,t}})= H \cap C(f_t(\mathscr{C}_{\alpha_{i,t}}))=0$ for any $t$. In fact, since $F$ is Whitney equisingular, $F(D(F))$ is Whitney equisingular. Hence each family of curves $f_t(\mathscr{C}_{\alpha_{i,t}})$ is Whitney equisingular (see \cite[Prop. 4.11(b)]{quartelypaper}). Since $H\cap f(\mathscr{C}_{\alpha_i})= H \cap C(f(\mathscr{C}_{\alpha_i}))=0$ and $f_t(\mathscr{C}_{\alpha_{i,t}})$ is Whitney equisingular, we conclude that for any $t$ sufficiently small we should have that $H\cap f_t(\mathscr{C}_{\alpha_{i,t}})= H \cap C(f_t(\mathscr{C}_{\alpha_{i,t}}))=0$. Therefore, we conclude that the plane $H$ is generic for $f_t$ for any $t\neq 0$ small enough. Now, for $t \neq 0$, a parametrization of the transversal slice $\gamma_{f_t}$ of $f_t$ is given by:

\begin{equation}\label{eq21}
\varphi(u)=(u^n+b_1tu^{n+n-a}+\cdots+b_{\eta}t^{\eta}u^{\eta n+n-\eta a}, \ u^m+c_1tu^{n+m-a}+\cdots+c_{\theta}u^{\theta n + m-\theta a}).
\end{equation}

By Lemma \ref{lemma aux} (b) we obtain that $a$ is odd. Since $n$ is even and $a\leq d_2=n$, we have that $a<n$. Therefore

\begin{center}
$n<m<n+m-a<\cdots$
\end{center}

\noindent and $gcd(n,m,n+m-a)=1$. 

Set $\epsilon:=1+b_1tu^{n-a}+\cdots+b_{\eta}t^{\eta}u^{\eta n- \eta a}=(1+A(u))$ which is an invertible element in $\mathcal{O}_1 \simeq \mathbb{C}\lbrace u \rbrace$. By \cite[Th. 3 and 17]{niven} (see also \cite[Th. 2.2]{cohen}) we know that there exist a unique invertible element $\xi$ in $\mathcal{O}_1$ such that $\xi^{n}=\epsilon$. We denote $\epsilon^{1/n}:=\xi$. More precisely, we have that

\begin{center}
$\epsilon^{1/n}=(1+A(u))^{1/n}= \displaystyle \sum_{j=0}^{\infty}$ $ {1/n}\choose{j}$ $A(x)^j = 1+\dfrac{1}{n}A(x)+\dfrac{\frac{1}{n}(\frac{1}{n}-1)}{2!}A(x)^2+\cdots$.
\end{center}

\noindent where $ {1/n}\choose{j}$ denotes the generalized binomial coefficient. Therefore, 

\begin{equation}\label{eq22}
(\epsilon^{1/n})^{-1}= 1 - \dfrac{1}{n}b_1u^{n-a}+\cdots
\end{equation}

\noindent where $`` \cdots "$ in (\ref{eq22}) denotes the terms of degree strictly greater than $n-a$. 

Consider the isomorphism $\chi:(\mathbb{C},0)\rightarrow (\mathbb{C},0)$ defined by $\chi(u)=u \cdot (\epsilon^{1/n})^{-1}$. Note that 

\begin{equation}\label{eq23}
\varphi \circ \chi(u)= \left( u^n, u^m \left( 1 - \dfrac{1}{n}b_1u^{n-a}+\cdots \right) ^m+c_1tu^{n+m-a} \left( 1 - \dfrac{1}{n}b_1u^{n-a}+\cdots \right)^{n+m-a}+ \cdots \right)
\end{equation}

\noindent is a Puiseux parametrization of $\gamma_t$, for $t\neq 0$. Note also that $m$ is the smallest power (with a non-zero coefficient) that appears in the second coordinate function of $\varphi \circ \chi$ in (\ref{eq23}). Since $gcd(n,m)=2$, we obtain that the characteristic exponents of $\gamma_t$ are $n,m$ and $k$ for some $k>m$ with $gcd(n,m,k)=1$. By Lemmas \ref{milnorgamma} and \ref{lemma aux3} we obtain that

\begin{center}
$\mu(\gamma_t,0)=(n-1)(m-1)+(n-a)=n\cdot m -2m+k-n+1$.
\end{center}

Therefore, $k=n+m-a$. In particular, either $ b_1 \neq 0$ or $ c_1 \neq 0$ in (\ref{eq23}). Now, since $F$ is Whitney equisingular, by \cite[Th. 5.3]{ref9} we have that $ \mu(\gamma_t,0)$ is constant along the parameter space. Hence, the family of plane curves $\gamma_t$ is topologically trivial. Therefore, $\gamma$ and $\gamma_t$, $t \neq 0$, have the same embedded topological type by \cite[Th. 5.2.2]{buch}. Thus we conclude that the characteristic exponents of $\gamma$ are $n=d_2$, $m=d_3$ and $n+m-a=d_2+d_3-a$.\end{proof}

$ \ \ $

Now we are able to present a proof of Theorem \ref{mainresult1}.

$ \ \ $

\textbf{Proof of Theorem \ref{mainresult1}}: Except in the case where $a\leq d_2$, $4\leq \dfrac{d_2}{b}$ and $gcd(d_2,d_3)=2$ (corresponding to the Case C of Proposition \ref{mult greater 2}), we have that $\gamma$ has only two characteristic exponents, namely, $n=d_2/b$ and $k$, where $k$ is described in each case by Propositions \ref{mult 2} and \ref{mult greater 2}. By Lemma \ref{milnorgamma} we have that:

\begin{center}
$\mu(\gamma,0)=(n-1)(k-1)=\dfrac{1}{ab^2}\bigg((d_2-b)(d_3-b)c+s a b (d_2-c) \bigg)$.
\end{center}

This implies that

\begin{center}
$k= \left( \dfrac{(d_2-c)(d_3-b)\cdot c + (d_2-c)\cdot sab}{ab(d_2-b)} \right) +1$
\end{center}

\noindent where $c=min\lbrace a,d_2 \rbrace$, $s=0$ if the restriction of $f$ to the line $x=0$ is generically $1-$to$-1$, or $s=1$, otherwise.

\begin{remark}\label{remarkons} Note that Theorem \rm\ref{mainresult1} \textit{shows that the answer to Question $1$ is in the positive. In fact, we only need to show that the number $s$ is determined by the weights and degrees, at least in the cases where $s$ is fundamental to determine the embedded topological type of the transversal slice of $f$. To see this, one can check that:}

$ \ \ $

\textit{(1) If $m(f(\mathbb{C}^2))\geq 3$, then $s=1$ if $gcd\left( \dfrac{d_2}{b},\dfrac{d_3}{b} \right)=2$ or $s=1$, otherwise} \rm(\textit{see Lemma} \rm\ref{lemma aux}\textit{(a)}\rm).

$ \ \ $

\textit{Now, suppose that $m(f(\mathbb{C}^2))=2$, hence we have that $d_3=as+rab$ and $r \cdot a$ is even, where $r$ is describe in Lemma} \rm\ref{lemma aux} \textit{(d). Then}

$ \ \ $

\textit{(2) If $b\neq 1$, then $s=1$ if $d_3\equiv a (mod \ b)$ or $s=1$, otherwise.}

$ \ \ $

\textit{(3) If $b=1$ and $a$ is odd, then $s=1$ if $d_3\equiv 1 (mod \ 2)$ or $s=1$, otherwise.}

$ \ \ $

\textit{(4) If $b=1$ and $a$ is even, it seems that $s$ may be not determined precisely by a relation between the weight and degrees of $f$. However, in this case Theorem} \rm\ref{mainresult1} \textit{says that the characteristic exponents of $\gamma$ are:}

\begin{center}
$2$ $ \ \ $ \textit{and} $ \ \ $ $\dfrac{d_3-1}{a}+1$.
\end{center}

\textit{Therefore, in this case the embedded topological type of $\gamma$ does not depends on the value of $s$. We conclude that in any situation (except situation (4) which does not depend on $s$) the number $s$ is determined by the weights and degrees of $f$. For instance, the characteristic exponents of $\gamma$ for any corank $1$, finitely determined, quasi-homogeneous map germ of type $(1,4,6; 1,1)$ are $4,6,9$, since in this case $c=s=1$ in Theorem} \rm\ref{mainresult1}.
\end{remark}

We finish this section with an example describing the transversal slice of a corank $1$, finitely determined, quasi-homogeneous map germ of type $(1,4,6; 1,1)$.

\begin{example}\rm(\cite[\textit{Example} \rm 5.5]{otoniel1})  \textit{Consider the map germ $f:(\mathbb{C}^2,0)\rightarrow (\mathbb{C}^3,0)$ defined as} 

\begin{center}
$f(x,y)=(x,y^4,x^5y-5x^3y^3+4xy^5+y^6)$. 
\end{center}

\textit{It is a corank $1$, finitely determined, quasi-homogeneous map germ of type $(1,4,6; 1,1)$. Thus, $c=1$ and $s=1$ and by Theorem} \rm\ref{mainresult1} \textit{we conclude that the characteristic exponents of the transversal slice of $f$ are $4$, $6$ and $9$.}

%\textit{Note that in this example, the plane $H=V(X)$ contains an irreducible component of $f(D(f))$ (see Figure} \rm\ref{figura3}) \textit{and therefore it is not generic for $f$.}
\end{example}

%\begin{figure}[h]
%\centering
%\includegraphics[scale=0.3]{example5.png} 
%\caption{The surface $f(\mathbb{C}^2)$ and the plane $H=V(X)$ (real points)}\label{figura3}
%\end{figure}
%- Se for trabalho com a conjectura de Zariski para pares, dizer que foi sugestão do Edson Sampaio.

\section{Some applications, natural questions and examples}\label{sec4}

$ \ \ \ \ $ In this section, we present two natural consequences of Theorem \ref{mainresult1}. We also consider some natural questions and provide counterexamples for them. We finish this section presenting examples to illustrates our results. For the computations we have made use of the software {\sc Singular} \rm\cite{singular} and the implementation of Mond-Pellikaan's algorithm given by Hernandes, Miranda, and Pe\~{n}afort-Sanchis in \rm\cite{ref6}. Alternatively, the reader can consult in \rm\cite[Prop. \rm 4.29]{otoniel3} a description of the presentation matrix of the push-forward module $f_{\ast}\mathcal{O}_{2}$ over $\mathcal{O}_3$ for maps germs in the form $f(x,y)=(x^k,y^n,h(x,y))$.

\subsection{Some applications}

$ \ \ \ \ $ As a direct consequence of Theorem \ref{mainresult1}, we present a necessary condition for a corank $1$ finitely determined map germ to be quasi-homogeneous with respect to some system of coordinates. 

\begin{corollary}\label{cor1} Consider a corank $1$, finitely determined map germ $g:(\mathbb{C}^2,0)\rightarrow (\mathbb{C}^3,0)$ of multiplicity $n$. Then, the following conditions are necessary for the existence of suitable coordinates in which $g$ is quasi-homogeneous.

$ \ \ $

\noindent Condition (a): The transversal slice $\gamma_g$ of $g$ has:

\begin{flushleft}
(1) either two characteristic exponents, namely $n$ and $l$ for some $l>n$, or

$ \ \ $ 

(2) three characteristic exponents, namely, $n<m<k$, with $gcd(n,m)=2$ and $k=n+m-a$ for some $m$ and $a$.
\end{flushleft}

\noindent Condition (b): $\mu(D(g),0)=\tau((D(g),0)$, where $\tau$ denotes the Tjurina number. 
\end{corollary}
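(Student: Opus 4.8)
The plan is to derive Corollary \ref{cor1} as a direct application of Theorem \ref{mainresult1} together with standard facts about quasi-homogeneous map germs. Suppose $g$ is $\mathcal{A}$-equivalent to a quasi-homogeneous map germ; since the embedded topological type of the transversal slice is an $\mathcal{A}$-invariant (see \cite{pellikaan1}), we may replace $g$ by a quasi-homogeneous representative in the normal form of Lemma \ref{lemma corank 1}. Now apply Theorem \ref{mainresult1}: the transversal slice $\gamma_g$ has either two characteristic exponents, and then the first one equals the multiplicity $d_2/b = n$ of $g$, which gives case (1) of Condition (a); or it has three characteristic exponents $d_2 < d_3 < d_2+d_3-a$ with $\gcd(d_2,d_3)=2$, in which case setting $m := d_3$ gives exactly case (2), the multiplicity again being $d_2 = n$ in this branch (as $b=1$ here, by Lemma \ref{lemma aux}(b)). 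This establishes Condition (a).

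For Condition (b), the point is that $D(g)$ is defined by a single equation $\lambda(x,y)$ (Lemma \ref{lemma aux}(d), or more generally \cite[Prop. 1.15]{mondformulas}), i.e. $D(g)$ is a plane curve, and when $g$ is quasi-homogeneous this defining polynomial $\lambda$ is itself quasi-homogeneous of type $(b(n-1)(m-1);a,b)$. For a quasi-homogeneous isolated plane curve singularity it is classical (Saito's criterion, \cite{saito}) that the Milnor and Tjurina numbers coincide, $\mu = \tau$. Since finite determinacy of $g$ forces $D(g)$ to have finite Milnor number (Theorem \ref{criterio}), hence an isolated singularity, we get $\mu(D(g),0) = \tau(D(g),0)$, which is Condition (b). One should double-check that the quasi-homogeneity of $\lambda$ survives passing to the normal form of Lemma \ref{lemma corank 1}; this is immediate since that lemma produces a quasi-homogeneous $f$ and the double point construction is natural.

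I expect the main (minor) obstacle to be bookkeeping of the multiplicity across the two branches of Theorem \ref{mainresult1}: one must observe that in every case the smaller characteristic exponent of $\gamma_g$ equals $m(g(\mathbb{C}^2)) = n$, which follows because a generic hyperplane section of a surface germ has multiplicity equal to that of the surface, and because in the normal form (\ref{eq14}) one has $m(f(\mathbb{C}^2)) = n$ and $d_2/b = n$. The rest is a direct citation of Theorem \ref{mainresult1} and of Saito's $\mu=\tau$ theorem for quasi-homogeneous singularities, so there is no substantial analytic content to overcome.
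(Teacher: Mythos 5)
Your proposal is correct and follows essentially the same route as the paper, which proves (a) as a direct consequence of Theorem \ref{mainresult1} and (b) from \cite[Prop. 1.15]{mondformulas} (quasi-homogeneity of the defining equation of $D(g)$) together with Saito's criterion \cite{saito}. The extra bookkeeping you supply (the $\mathcal{A}$-invariance of the slice's embedded topological type and of $\mu$, $\tau$ of $D(g)$, and the identification of the smallest characteristic exponent with the multiplicity $n$, with $b=1$ in the three-exponent case) is exactly what the paper leaves implicit.
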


\begin{proof} (a) It follows directly by Theorem \ref{mainresult1}.

$ \ \ $  

\noindent (b) It follows by \cite[Prop. 1.15]{mondformulas} (see expression (\ref{eq2}) in Section \ref{sec3}) and Saito's criterion for quasi-homogeneity of isolated hypersurfaces singularities \cite{saito}.
\end{proof}

\begin{example} Consider the map germ $g:(\mathbb{C}^2,0)\rightarrow (\mathbb{C}^3,0)$ defined by

\begin{center}
$g(x,y)=(x,y^8,y^{12}+y^{14}+y^{15}+x^{11}y)$. 
\end{center}

It is a corank $1$ finitely determined map germ. One can check that the transversal slice of $g$ has four characteristic exponents: $8,12,14$ and $15$. Hence, there is no system of coordinates such that $g$ is quasi-homogeneous.

Using {\sc Singular} we found that $\mu(D(g),0)=5978>4575=\tau(D(g),0)$, which is another way to check that $g$ is not quasi-homogeneous.
\end{example}

Another consequence of Theorem \ref{mainresult1}, is about Whitney equisingularity of a one-parameter unfolding of $f$. 

\begin{definition}\label{defWhiteq} Given an unfolding $F(\mathbb{C}^2 \times \mathbb{C},0)\rightarrow (\mathbb{C}^3 \times \mathbb{C},0)$ defined by $F(x,y,t)=(f_t(x,y),t)$, we assume it is origin preserving, that is, $f_t(0,0)=(0,0)$ for any $t$. Hence, we have a $1$-parameter family of map germs $f_t:(\mathbb{C}^2,0)\rightarrow (\mathbb{C}^3,0)$. We say that $F$ is Whitney equisingular if there is a representative of $F$ which admits a regular stratification so that the parameter axes $S=\lbrace (0,0) \times \mathbb{C} \subset \mathbb{C}^2 \times \mathbb{C}$ and $T=\lbrace (0,0,0) \rbrace \times \mathbb{C} \subset \mathbb{C}^3 \times \mathbb{C}$ are strata.

\end{definition}

\begin{corollary}\label{cor2} Let $f:(\mathbb{C}^2,0)\rightarrow (\mathbb{C}^3,0)$ be a corank $1$, finitely determined quasi-homogeneous map germ. Consider an one-parameter unfolding $F:(\mathbb{C}^2 \times \mathbb{C},0)\rightarrow (\mathbb{C}^3 \times \mathbb{C},0)$, $F(x,y,t)=(f_t(x,y),t)$. Write $f_t$ as:

\begin{center}
$f_t(x,y)=(x+g_t(x,y), \tilde{p}(x,y)+h_t(x,y), \tilde{q}(x,y)+l_t(x,y))$.
\end{center}

If $F$ adds only terms of the same degrees as the degrees of $f$, that is, if for a fixed $t_0$ sufficiently small, $w(g_{t_0}(x,y))=w(x)$, $w(h_{t_0}(x,y))=w(\tilde{p}(x,y))$ and $w(l_{t_0}(x,y))=w(\tilde{q}(x,y))$ (considering $t_0$ as a constant in $f_{t_0}$), then $F$ is Whitney equisingular. In particular, $m(f_t(\mathbb{C}^2))$ is constant along the parameter space.
\end{corollary}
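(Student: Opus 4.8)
The plan is to reduce the statement to an application of the Whitney equisingularity criterion of Gaffney together with the invariance results already used in the proof of Proposition \ref{mult greater 2}. First I would observe that since $F$ adds only terms of the same weighted degrees as the corresponding coordinate functions of $f$, the unfolding $F$ is in particular \emph{upper} in the sense of Damon (every added monomial has weighted degree $\geq d_i$ in the $i$-th coordinate). Hence by \cite[Th. 1]{damon92} $F$ is topologically trivial, and by \cite[Cor. 40]{ref2} (equivalently \cite[Th. 6.2]{ref1}) the Milnor number $\mu(D(f_t),0)$ is constant along the parameter $t$. This already handles all the ``numerical'' invariants of the double point curve except the multiplicities, which are exactly what Gaffney's criterion \cite[Prop. 8.6 and Cor. 8.9]{gaffney} needs in addition.

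Next I would run, essentially verbatim, the multiplicity argument from the proof of the Statement inside Case B of Proposition \ref{mult greater 2}. Writing $D(f)=V(\lambda(x,y))$ with $\lambda$ quasi-homogeneous of type $(b(n-1)(m-1);a,b)$ as in \eqref{eq2}, each irreducible component is of the form $V(x)$, $V(y)$, or $\mathscr{C}_{\alpha_i}=V(y^a-\alpha_i x^b)$, and the images $f_t(\mathscr{C}_{i,t})$ of the deformed components carry parametrizations of the shape \eqref{eq19}, \eqref{eq20}, \eqref{eq4}. Because $F$ adds only terms of the same weighted degrees, the leading (lowest weighted degree) terms of these parametrizations are unchanged, so the multiplicity $m(f_t(\mathscr{C}_{i,t}))$ — which equals $a$, $a/2$, $n$, $n/2$, or the appropriate value read off the leading exponent — is constant in $t$; summing over components, $m(f_t(D(f_t)))$ is constant. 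Combined with the constancy of $\mu(D(f_t),0)$, Gaffney's criterion yields that $F$ is Whitney equisingular. The constancy of $m(f_t(\mathbb{C}^2))$ then follows because it equals the multiplicity of the local ring of the target surface, which is a Whitney-equisingularity invariant (alternatively, in the normal form \eqref{eq14} it is visibly the exponent $n$, which the hypothesis on degrees does not change).

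The one point requiring care is that $f_t$ for $t\neq 0$ need not be quasi-homogeneous, nor be presented in the clean normal form of Lemma \ref{lemma corank 1}; so the structural facts about $D(f_t)$ (Lemma \ref{lemma aux}) are only directly available at $t=0$. The fix is that finite determinacy and the structure of $D(f_t)$ as a reduced curve are open conditions: shrinking the representative and the parameter interval, $f_t$ remains finitely determined for all small $t$ (topological triviality of $F$ guarantees this), and the decomposition of $D(f_t)$ into identification and fold components is a flat deformation of that of $D(f)$, so the component-by-component multiplicity bookkeeping above is legitimate. I expect this openness/flatness argument — making precise that the components $\mathscr{C}_{i,t}$ deform without collapsing or splitting and that their images' leading terms are genuinely controlled by the degree hypothesis — to be the main obstacle, though it is essentially the same bookkeeping already carried out in Case B of Proposition \ref{mult greater 2}.
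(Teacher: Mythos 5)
Your opening step (the unfolding is upper, so Damon's theorem gives topological triviality and hence constancy of $\mu(D(f_t),0)$) coincides with the paper. After that, however, you take a much harder road because of a misreading of the hypothesis: ``adds only terms of the same degrees'', with $t_0$ treated as a constant, means precisely that every $f_{t_0}$ is itself quasi-homogeneous of the \emph{same type} $(a,d_2,d_3;a,b)$ as $f$. Your remark that ``$f_t$ for $t\neq 0$ need not be quasi-homogeneous'' is exactly what the hypothesis rules out (contrast with the unfolding used inside Case B of Proposition \ref{mult greater 2}, which adds a term of possibly strictly higher degree, and with the situation recalled in the remark following the corollary, where adding a higher-degree term can destroy Whitney equisingularity, see \cite[Example 5.5]{otoniel1}). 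Once this is observed, the paper's proof is short and avoids Gaffney altogether: Theorem \ref{mainresult1} applies to each $f_t$ with the same weights and degrees, so the embedded topological type of $\gamma_t$, and in particular $\mu(\gamma_t,0)$, is constant in $t$; combined with the constancy of $\mu(D(f_t),0)$, Whitney equisingularity follows from \cite[Th. 5.3]{ref9}. In other words, the corollary is meant to be a direct application of the main theorem, which your proposal never invokes.

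As written, your alternative route has a genuine gap at the very point you flag. To apply \cite[Prop. 8.6 and Cor. 8.9]{gaffney} you need the constancy of $m(f_t(D(f_t)))$, and your justification is that the decomposition of $D(f_t)$ into identification and fold components ``is a flat deformation'' of that of $D(f)$ with unchanged leading terms; this is asserted, not proved, and it is where all the work lies for a general unfolding. A priori the branches $V(y^a-\alpha_i(t)x^b)$ could change how they pair into identification pairs, or an identification pair could degenerate into a fold component; this changes the number of branches of the image curve $f_t(D(f_t))$ and hence its multiplicity, and ruling it out is exactly the explicit, unfolding-specific computation carried out in Case B, where $f_t$ is given in closed form and, e.g., $\mathscr{C}_t=V(x)$ is seen to be constant. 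The cleanest way to make your bookkeeping rigorous is to use that each $f_t$ is quasi-homogeneous of fixed type, so that $D(f_t)=V(\lambda_t)$ with $\lambda_t$ reduced quasi-homogeneous of fixed type and all image branches are parametrized with the fixed exponents $a$, $d_2$, $d_3$ (or their halves); but once one uses that fact, the direct appeal to Theorem \ref{mainresult1} and \cite[Th. 5.3]{ref9} is both shorter and complete, so the Gaffney detour buys nothing here. Your final observation that $m(f_t(\mathbb{C}^2))$ is constant along a Whitney equisingular family is fine and matches the paper's ``in particular''.
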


\begin{proof} By \cite[Th. 1]{damon92} we have that $F$ is topologically trivial. Hence, by Theorem \ref{criterio} we have that $\mu(D(f_t),0)$ is constant. Note that $f$ and $f_t$ are quasi-homogeneous of the same type. By Theorem \ref{mainresult1} we have that the embedded topological type of the transversal slice $\gamma_t$ of $f_t$ is the same for any $t$. Hence, $\mu(\gamma_t,0)$ is constant and therefore $F$ is Whitney equisingular by \cite[Th. 5.3]{ref9}.\end{proof}

\begin{remark}(a) We note that if $F$ adds some term of degree strictly greater than the degrees of $f$ then it is topologically trivial but it may be not Whitney equisingular (see \rm\cite[\textit{Example} \rm 5.5]{otoniel1}).

$ \ \ $

\noindent \textit{(b) If $g$ has corank $1$ (and it is not necessarily quasi-homogeneous), Marar and Nuño-Ballesteros present in} \rm\cite[\textit{Cor.} \rm 4.7]{slice} \textit{a characterization of Whitney equisingularity of $F$ in terms of the constancy of the invariants $C$, $T$ and $J$ along the parameter space. If $g$ is quasi-homogeneous, Mond shows in} \rm\cite{mondformulas} \textit{(for any corank) that the invariants $C$ and $T$ are determined by the weights and degrees of $g$. The author shows in} \rm\cite[\textit{Th.} \rm 3.2]{otonielformulaJ} \textit{that the invariant $J$ is also determined by the weights and degrees of $f$. Hence, it gives another proof of Corollary} \rm\ref{cor2}.
\end{remark}

\subsection{Natural questions and examples}

$ \ \ \ \ $ We note that Question $1$ also makes sense for quasi-homogeneous finitely determined map germs of corank $2$. More precisely, one can consider the following natural question.

$ \ \ $

\noindent \textbf{Question 2:} Let $f:(\mathbb{C}^2,0)\rightarrow (\mathbb{C}^3,0)$ be a corank $2$ quasi-homogeneous finitely determined map germ. Is the embedded topological type of the transversal slice $\gamma$ of $f$ determined by the weights and degrees of $f$?

$ \ \ $

The following example shows that the answer to Question $2$ is in the negative.

\begin{example}\label{excorank2} Consider the map germs $g_i:(\mathbb{C}^2,0)\rightarrow (\mathbb{C}^3,0)$, defined by

\begin{center}
 $g_1(x,y)=(x^2+xy,y^3,(x+y)^5)$, $ \ \ $ $g_2(x,y)=(x^2-xy+y^2,y^3,(x+y)^5)$, $ \ \ $ and $ \ \ $ $g_3(x,y)=(x^2,y^3,(x+y)^5)$.
\end{center}

Each $g_i$ is a homogeneous finitely determined map germ of corank $2$, of same type, $(2,3,5;1,1)$ (see \rm\cite[\textit{Ex.} \rm 16]{guille} \textit{and} \rm\cite[\textit{Ex.} \rm 5.4]{otoniel1}). \textit{The transversal slice of $g_1$ and $g_2$ has two branches, which we will denoted by $\gamma_{i}^1$ and $\gamma_{i}^2$. On the other hand, the transversal slice of $g_3$ is an irreducible curve, which will denote by $\gamma=\gamma_3^1$.} 

\textit{We recall that topological type of a plane curve determines and is determined by the characteristic exponents of each branch and by the intersection multiplicities of the branches. In this way, Table} \rm\ref{tabela3} \textit{shows that the embedded topological type of these three transversal slices are distinct.}  

\begin{table}[!h]
\caption{Topological invariants for the transversal slice of $g_i$}\label{tabela3}
\centering
{\def\arraystretch{1.7}\tabcolsep=16pt 

\begin{tabular}{ c | c | c |  c }

\hline 

\multirow{2}{*}{\textbf{$g_i(x,y)$}}  & \textbf{Characteristic} & \textbf{Characteristic } & Intersection \\
 
  & \textbf{Exponents of $\gamma_i^1$} & \textbf{Exponents of $\gamma_i^2$} & Multiplicity \\

\hline

$(x^2+xy,y^3,(x+y)^5)$  & $3$, $ \ 5$ & $3$, $ \ 5$ & $15$ \\

\hline

$(x^2-xy+y^2,y^3,(x+y)^5)$  & $3$, $ \ 5$ & $3$, $ \ 5$ & $16$ \\

\hline

$(x^2,y^3,(x+y)^5)$  & $6$, $ \ 10$, $ \ $ $11$ & - & - \\

\hline
\end{tabular}
}
\end{table}

\end{example}  

Fixed the weights and degrees of a corank $2$ finitely determined map germ $f$ from $(\mathbb{C}^2,0)$ to $(\mathbb{C}^3,0)$, it seems that there is a finite number of distinct topological types for $\gamma$. We propose the following problem:

$ \ \ $

\noindent \textbf{Problem:} Fixed the weights and degrees of a corank $2$ finitely determined map germ $f$ from $(\mathbb{C}^2,0)$ to $(\mathbb{C}^3,0)$, determine all possible distinct topological types that the transversal slice of $f$ can have.

$ \ \ $

Since Theorem \ref{mainresult1} does not extend to the corank $2$ case, one might think that the hypothesis of corank $1$ must be a special condition that could be considered in other cases. Thus, another natural question is the following one:

$ \ \ $

\noindent \textbf{Question 3:} Let $f:(\mathbb{C}^n,0)\rightarrow (\mathbb{C}^{n+1},0)$, with $n\geq 3$ be a corank $1$ quasi-homogeneous finitely determined map germ.

$ \ \ $

\noindent(a) Is the embedded topological type of a generic hyperplane section of $f$ determined by the weights and degrees of $f$?

\noindent(b) Is the Corollary \ref{cor1} true in this case? That is, a one-parameter unfolding $F=(f_t,t)$ of $f$ which adds only terms of the same degrees as the degrees of $f$ is Whitney equisingular?

$ \ \ $

The following example shows that the answers to Question $3$ (a) and Question $3$ (b) are in the negative.

\begin{example}\label{exhigherdim} Consider the families of map germs $f_t:(\mathbb{C}^3,0)\rightarrow (\mathbb{C}^4,0)$ defined by

\begin{center}
 $f_t(x,y,z)=(x,y,z^2,z(x^5+yz^{14}+y^{15}+txz^{12}))$.
\end{center}
 
We have that each $f_t$ is a corank $1$ finitely determined map germ of same type, i.e, the deformation of $f_0$ only adds terms of same weighted degrees. However, this family is not Whitney equisingular \rm(\cite[\textit{Ex.} \rm 6.2]{damon92}). \textit{One can check also that the generic hyperplane sections $\gamma_0$ and $\gamma_t$ have distinct embedded topological types.}
\end{example}
  
%\subsection{Characteristic exponents for the transversal slice in Mond's list}\label{sec4.3}

We finish this work presenting in Table \ref{tabela1} the characteristic exponents for the transversal slice $\gamma$ of each map germ in Mond's list \cite[p.378]{mond6}.

\begin{table}[!h]
\caption{Characteristic exponents for $\gamma$ of quasi-homogeneous map germs in Mond's list.}\label{tabela1}
\centering
{\def\arraystretch{1.8}\tabcolsep=4pt 

\begin{tabular}{ c | c | c | M{1cm}| M{1cm} | M{2.5cm} }

\hline
\rowcolor{lightgray}
Name  &  $f(x,y)$  &   Quasi-Homogeneous type & $c$ & $s$  & Characteristic exponents of $\gamma$  \\

\hline
Cross-Cap     & $(x,y^2,xy)$ & $(1,2,2;1,1)$ & $1$  & $1$  & $2$, $ \ 3$     \\
\hline
$S_k$,  $k\geq 1$ odd   & $(x,y^2,y^3+x^{k+1}y)$  & $(1,k+1,\frac{3(k+1)}{2}; 1,\frac{k+1}{2})$ & $1$ & $0$  &    $2$, $ \ 3$ \\
\hline
$S_k$,  $k\geq 1$ even    & $(x,y^2,y^3+x^{k+1}y)$  & $(2,2k+2,3k+3;2,k+1)$ & $2$  & $0$  &    $2$, $ \ 3$    \\
\hline
$B_k$, $k\geq 3$    & $(x,y^2,y^{2k+1}+x^2y)$  & $(k,2,2k+1;k,1)$ & $2$ & $0$  &    $2$, $ \ 5$      \\
\hline
$C_k$, $k\geq 3$ odd    & $(x,y^2,xy^3+x^ky)$  & $(1,k-1,\frac{3k-1}{2};1,\frac{k-1}{2})$ & $1$ & $1$  &    $2$, $ \ 5$     \\
\hline   
$C_k$, $k\geq 3$ even   & $(x,y^2,xy^3+x^ky)$ & $(2,2k-2,3k-1;2,k-1)$ & $2$ & $1$  &    $2$, $ \ 5$        \\
\hline   
$F_4$     & $(x,y^2,y^5+x^3y)$ & $(4,6,15;4,3)$ & $4$ & $0$  & $2$, $ \ 5$       \\
\hline    
$H_k$     & $(x,y^3,y^{3k-1}+xy)$, $k\geq 2$ & $(3k-2,3,3k-1;3k-2,1)$ & $3$ & $0$  &    $3$, $ \ 4$       \\
\hline
$T_4$     & $(x,y^3+xy,y^4)$ & $(2,3,4;2,1)$ & $2$ & $0$  & $3$, $ \ 4$        \\
\hline  
$P_3$     & $(x,y^3+xy,cy^4+xy^2)^{\ast}$ & $(2,3,4;2,1)$ & $2$ & $0$  &    $3$, $ \ 4$   \\
\hline   

\multicolumn{5}{l}{$\ast \ c\neq 0,1/2,1,3/2$}.

\end{tabular}
}
\end{table}

\begin{remark} We note that all figures used in this work were created by the author using the software Surfer \rm\cite{surfer}.
\end{remark}
 
\begin{flushleft}
\textit{Acknowlegments:} The author would like to thank Juan José Nuño-Ballesteros, João Nivaldo Tomazella and Maria Aparecida Soares Ruas for many helpful conversations. The author acknowledges support by São Paulo Research Foundation (FAPESP), grant 2020/10888-2.
\end{flushleft}

\small

\end{document}